\documentclass[twocolumn]{autart}    

\usepackage{graphicx}
\usepackage{amssymb}  
\usepackage{amsmath}
\usepackage{enumitem}
\usepackage{bm}
\usepackage[justification=centering]{caption}
\usepackage{mathtools}
\usepackage{changepage} 
\mathtoolsset{showonlyrefs}
\usepackage{amsfonts}
\usepackage{dirtytalk}
\usepackage{bigints}
\usepackage{url}

\newcommand{\Mc}{\mathcal{M}}
\newcommand{\Pc}{\mathcal{P}}
\newcommand{\Uc}{\mathcal{U}}
\newcommand{\Lte}{\mathcal{L}_{2e}}
\newcommand{\Lt}{\mathcal{L}_{2}}
\newcommand{\Lo}{\mathcal{L}_{1}}
\usepackage{color}

\newtheorem{theorem}{Theorem}
\newtheorem{assumption}[theorem]{Assumption}
\newtheorem{definition}[theorem]{Definition}
\newtheorem{example}[theorem]{Example}

\begin{document}
\begin{frontmatter}
\runtitle{Insert a suggested running title} 
\title{Antagonistic Control: Foundations, Scalability and Nonlinearity\thanksref{footnoteinfo}} 
\thanks[footnoteinfo]{Corresponding author Sribalaji C. Anand. This research is supported by the Swedish Research Council grant 2024-00185, and by the Knut and Alice Wallenberg Foundation.}
\author[P,KTH]{Sribalaji C. Anand} 
\author[UU]{Andr\'e M.H. Teixeira }
\address[P]{Department of Electrical and Systems Engineering, University of Pennsylvania, United States.}
\address[KTH]{Department of Decision and Control Systems, KTH Royal Institute of Technology, Stockholm, Sweden.}
\address[UU]{Department of Information Technology, Uppsala University, Sweden.\\(email: srca@kth.se, andre.teixeira@it.uu.se)}
%
%
%
\begin{abstract}
This paper studies the worst-case impact of constrained control inputs: an input seeks to maximize the average cost of some outputs, measured in the $L_2$ or $L_1$ norm, while remaining bounded in terms of other outputs. This problem template subsumes classical metrics such as the $H_\infty$ norm and the output-to-output gain, and arises in adversarial control, security assessment, and robust control. For linear time-invariant systems, we provide an exact semi-definite program (SDP) when there is a single constraint, and SDPs computing upper bounds when there are multiple constraints. We derive sufficient conditions, in terms of system zeros and relative degrees, under which the worst-case cost is unbounded, together with a constructive closed-loop modification that removes the unboundedness. From a security standpoint, unbounded values reveal structural limitations in detecting certain attack inputs. For positive systems, we provide scalable formulations whose complexity grows linearly in the state dimension: a scalable SDP for quadratic costs, and an exact linear program for linear costs. The results extend to nonlinear polynomial systems via a sum-of-squares program. We illustrate the results with numerical examples.
\end{abstract}
\end{frontmatter}
\section{Introduction}
In this paper, we consider a continuous-time (CT) dynamical system of the form 
\begin{equation}\label{P_main}
\begin{aligned}
    \dot{x}(t) &= f(x(t),u(t)),\; x(0)=0\\
    y_i(t) &= h_i(x(t), u(t)), \; i\in \{1,2,\dots,N\}
\end{aligned}
\end{equation}
where $x(t) \in \mathbb{R}^n$ is the state, $u(t) \in \mathbb{R}^m$ is the control input, and $y_i \in \mathbb{R}^{p_i}$ is the system output. Here $y_i$ can represent a physical sensor output or a virtual performance loss output. We are interested in solving the \emph{antagonistic control problem}
\begin{equation}\label{eq:main:sup}
\begin{aligned}
    \sup_{u \in \mathcal{U}} \;\; 
    & \frac{1}{\lvert \mathcal{P} \rvert}
    \sum_{i \in \mathcal{P}} \left[ \int_{0}^{\infty} \ell(y_i(t),Q_i) \, dt \right] \\
    \text{s.t.} \;\;
    & \int_{0}^{\infty} \ell(y_j(t),R_j) \, dt \le \delta_j, 
    \quad \forall j \in \mathcal{M},
\end{aligned}
\end{equation}
where $\ell(\cdot,\cdot) \in \mathbb{R}$ is a loss function mapping the system output to a real number, $\mathcal{P}$ and $\mathcal{M}$ denote the sets of outputs influencing the objective function and the constraints, respectively, and $\delta_j > 0, \forall j \in \mathcal{M}$, denotes the constraint threshold. For regularity, we establish that $Q_i \succeq 0, \forall i \in \mathcal{P}$, $R_j \succeq 0, \forall j\in \mathcal{M}$, $\mathcal{P} \cap \mathcal{M} = \emptyset$, $\mathcal{P} \cup \mathcal{M} = \{1,2,\dots,N\}$, $\mathcal{P} \neq \emptyset$, and $\mathcal{M} \neq \emptyset$. The objective denotes the average cost over the outputs in $\Pc$, while the cost of the outputs in $\Mc$ is bounded.

The central observation of this paper is that a single problem template, \eqref{eq:main:sup}, subsumes several well-studied quantities as special cases: if $N=2$, $\vert \Pc \vert = \vert \Mc \vert=1$, $\ell(y,Q) = y^\top Qy$, and $\ell(y,R) = u^\top Ru$, its value is the $H_{\infty}$ norm of an LTI system \cite{petersen2012robust}; other choices of $\ell(\cdot,\cdot)$, $\mathcal{U}$, and the partition $(\mathcal{P}, \mathcal{M})$ recover the output-to-output $\ell_2$-gain~\cite{teixeira2015strategic} and the ratio between the $H_\infty$ norm and the $H_\_$ index~\cite{wang2007worst}. 
Before presenting the contributions, we discuss an application example.
\begin{example}\label{exmp:attack}
Consider a dynamical system (such as power networks) whose performance is governed by a virtual performance loss output $y_p$. The system operator aims to design control input to maintain the norm of the performance loss output close to zero. Let us suppose that the control inputs to the system is subjected to adversarial attacks \cite{mengis2017data} (due to the presence of unsecured networks). Then an alarm is raised to indicate the presence of an attack when the norm of the measured outputs $y_m$ increases beyond a threshold $\delta$ \cite{teixeira2021security}.

Then the worst-case attack policy can be modeled as
\begin{equation}\label{eq:impact}
\sup_{a \in \mathcal{A}} \left\{ \int_{0}^{\infty} y_p(t)^\top y_p(t)dt \Big| \int_{0}^{\infty} y_m(t)^\top y_m(t)dt \leq \delta \right\}
\end{equation}
where the adversary injects an attack signal $a$ to increase the performance loss while not raising an alarm. Thus \eqref{eq:impact} is a variant of the optimization problem \eqref{eq:main:sup} whose value serves as a measure of attack impact. Design strategies can be employed to minimize the attack impact \cite{lipp2016antagonistic}. $\hfill \triangleleft$
\end{example}
\textbf{Contributions:} The contribution of this paper is a unified analysis of \eqref{eq:main:sup} across system classes (LTI, positive LTI, polynomial), cost functions (quadratic, linear), input signal spaces ($\Lte$, $\Lt^0$, $\Lt^{0+}$, $\Lo^{+}$), and number of constraints, summarized in Table~\ref{main:res:tab}. All results follow one pipeline: dualization of \eqref{eq:main:sup}, followed by (cyclo-)dissipativity arguments, yielding a semi-definite program (SDP), linear program (LP), or sum-of-squares (SOS) program. When there is a single constraint, the resulting programs are \emph{exact}; with multiple constraints, they provide upper bounds on the worst-case cost, for which no generic computation method exists in the literature. We highlight three features of the analysis. First, the value of \eqref{eq:main:sup} is highly sensitive to the input signal space: Example~\ref{exmp:QTP} below exhibits a system whose worst-case cost is finite ($\approx 19.7$) over $\Lt^0$ yet \emph{unbounded} over $\Lte$, exposing a structural vulnerability invisible to the standard analysis. We provide sufficient conditions, in terms of system zeros and relative degrees, under which such unboundedness occurs (Theorems~\ref{thm:inf:zero}, \ref{thm:LTI:lower}, \ref{thm:LTI:L2}), together with a constructive modification of the closed loop that removes it (Theorem~\ref{thm:int}). Second, for positive systems, we obtain \emph{scalable} formulations: an SDP whose number of parameters grows linearly (rather than quadratically) in the state dimension (Theorem~\ref{thm:SDP:scalable}), and an exact LP for linear costs (Theorem~\ref{thm:LP:scalable}). Unlike \cite{nguyen2025scalable,anand2024scalable}, which achieve scalability by bounding the attack energy, our formulations require no such bound, which may not be known in general. Third, the analysis extends beyond LTI dynamics to polynomial systems via an SOS program (Theorem~\ref{thm:SOS}). Preliminary versions of isolated instances of \eqref{eq:main:sup} appeared in our earlier work for discrete-time and/or single-output settings \cite{teixeira2015strategic,nguyen2022single,anand2020joint}; the present paper provides a unified continuous-time, multiple-output treatment with complete proofs.

\begin{table*}[t]
\centering
\caption{Summary of results for different specifications of $\ell(\cdot,\cdot)$, $\Uc$, and $|\Mc|$ in \eqref{eq:main:sup}.}
\label{main:res:tab}
\begin{tabular}{|c|c|c|c|c|l|}
\hline
System class & $\ell(y,S)$ & $\Uc$ & $|\Mc|$ & Result & Program and guarantee \\ \hline\hline 
LTI & $y^\top S y$ & $\Lte$ & $1$ & Theorems~\ref{thm:LTI:L2e}, \ref{thm:inf:zero}, \ref{thm:int} & Exact SDP; unboundedness conditions; closed-loop remedy \\ \hline
LTI & $y^\top S y$ & $\Lte$ & $>1$ & Theorem~\ref{thm:LTI:lower} & SDP, upper bound; unboundedness conditions \\ \hline
LTI & $y^\top S y$ & $\Lt^0$  & $>1$ & Theorem~\ref{thm:LTI:L2} & SDP, upper bound; unboundedness conditions \\ \hline
Positive LTI & $y^\top S y$ & $\Lt^{0+}$ & $>1$ & Theorem~\ref{thm:SDP:scalable} & Scalable SDP ($O(n)$ parameters), upper bound \\ \hline
Positive LTI & $q^\top |y|$   & $\Lo^{+}$ & $1$ & Theorem~\ref{thm:LP:scalable} & Exact LP \\ \hline
Polynomial & $y^\top S y$ & $\Lte$ & $>1$ & Theorem~\ref{thm:SOS} & SOS program, upper bound \\ \hline
\end{tabular}
\end{table*}

The value of \eqref{eq:main:sup} depends strongly on the choice of the input signal space $\Uc$. The following example illustrates this by exhibiting a system whose worst-case cost is finite over $\Lt^{0}$ but unbounded over $\Lte$\footnote{For reproducibility, the code can be found at \url{https://tinyurl.com/fw9ktkak}}.
\begin{example}\label{exmp:QTP}
Consider the quadruple tank process \cite{johansson2002quadruple} where the states denote the height of the tanks, the inputs denote the pump flows into the top two tanks, and the measurements denote the height of the bottom two tanks. The adversary injects attacks into the actuator channels and aims to maintain the attacked outputs near the nominal output while simultaneously making the state values deviate far from zero. This is contrary to the operator objective which aims to maintain the nominal states. 

We first solve the optimization problem \eqref{eq:main:sup} when $\Uc = \Lt^{0}$, using Theorem~\ref{thm:LTI:L2}, and obtain the optimal value as $\gamma_{\Lt} = 19.6859$. The SDP (and all other examples in this paper) is solved using YALMIP \cite{lofberg2004yalmip} and MOSEK~11 \cite{aps2019mosek}. That is, for any square integrable attack signals which eventually stop, the maximum state deviation (measured in terms of $2$-norm) is at-most $\gamma_{\Lt}$. Next, we solve the optimization problem \eqref{eq:main:sup} when $\Uc = \Lte$, using Theorem~\ref{thm:LTI:L2e}, and obtain the optimal value as $\gamma_{\Lte} = +\infty$. That is, for an adversary that injects attack signals in the $\Lte$ space, the adversary can cause unbounded deviation in the states without being detected, which exposes a fundamental structural limitation in the system. 
$\hfill \triangleleft$
\end{example}
\textbf{Related works:}
In this section, we review prior works that study optimization problems of the form \eqref{eq:main:sup}. The work \cite{milovsevic2018quantifying} quantifies the worst-case $\ell_\infty$ norm of a performance measure under stealthy attack constraints, where an attack is defined as stealthy if it does not trigger an alarm at the detector. In contrast, the present paper focuses on performance measures based on the $L_2$ and $L_1$ norms. Related studies along these lines include \cite{sui2020vulnerability,milovsevic2019estimating}. 

The work \cite{khazraei2022resiliency} considers the maximum state deviation (measured in terms of the $L_2$ norm) caused by stealthy attacks on nonlinear systems, where stealthiness is defined as the ability to remain undetected under any possible detector. The work \cite{sargolzaei2021secure} studies the maximum asymptotic deviation caused by bounded attacks and disturbances. In contrast, our work focuses on polynomial systems and provides a constructive approach to quantify security against stealthy attacks. Related studies along these lines include \cite{shang2024nonlinear,kato2021security}

The work \cite{wang2007worst} designs an observer to maximize the ratio between the $H_\infty$ norm and the $H_{\_}$ index. When $m = p = 1$ and $N = 2$, the result in Theorem~\ref{thm:LTI:L2e} of this article coincides with this ratio. A detailed discussion of this equivalence is provided in \cite{teixeira2021security}.

The work \cite{yoon2005worst} studies a minimax optimal control problem for uncertain stochastic systems and designs control policies against worst-case uncertainties \cite[(14)]{yoon2005worst}. In \cite{elia2002minimization}, the authors propose a finite-horizon control policy that minimizes the worst-case $\ell_\infty$ norm of a performance measure under bounded disturbances. In contrast, the present paper considers performance criteria based on $L_2$ and $L_1$ norms rather than the $\ell_\infty$ norm. Moreover, the results obtained here are horizon length independent. 
 
The paper \cite{meng2017dynamic} considers the problem of designing control policies in cooperative-antagonistic networks where some agent pairs are cooperative and some are antagonistic. 
Related studies along these lines include \cite{ssun2017controllability,meng2019extended}. 
The paper \cite{lipp2016antagonistic} proposes a method to obtain the value of \eqref{eq:main:sup} in finite horizon. However, the results presented in this paper are horizon length independent, and scalable.

\textbf{Notation:}
A matrix $A \in \mathbb{R}^{n \times n}$ is called Metzler, denoted by $A \in \mathbb{M}^n$, if all its off-diagonal entries are nonnegative, i.e., $a_{ij} \geq 0$ for all $i \neq j$. 
Let $x: \mathbb{R}_+ \to \mathbb{R}^n$ be a CT signal. Then, the $2$-norm over the horizon $[0,N]$ is $\|x\|_{L_2,[0,N]}^2 = \int_0^{N} x(t)^\top x(t)\, dt$. The $1$-norm of $x$ over the horizon $[0,N]$ is $\|x\|_{L_1,[0,N]} = \int_0^{N} \mathbf{1}^\top |x(t)|\, dt$, where $|\cdot|: \mathbb{R}^n \to \mathbb{R}_+^n$ denotes the element-wise absolute value.
\begin{equation}
\begin{aligned}
\mathcal{L}_p &= \left\{ x: \mathbb{R}_+ \to \mathbb{R}^n \;\middle|\; \|x\|_{L_p,[0,\infty)} < \infty \right\}, \quad p \in \{1,2\}\\
\mathcal{L}_p^+ &= \left\{ x: \mathbb{R}_+ \to \mathbb{R}_+^n \;\middle|\; \|x\|_{L_p,[0,\infty)} < \infty \right\}\\
\mathcal{L}_p^0 &= \left\{ x \in \mathcal{L}_p \;\middle|\; \lim_{t \to \infty} x(t) = 0 \right\}\\
\mathcal{L}_p^{0+} &= \left\{ x \in \mathcal{L}_p^+ \;\middle|\; \lim_{t \to \infty} x(t) = 0 \right\}\\
\mathcal{L}_{pe} &= \left\{ x: \mathbb{R}_+ \to \mathbb{R}^n \;\middle|\; \|x\|_{L_p,[0,N]} < \infty, \;\forall N \in \mathbb{R}_+ \right\}\\
\mathcal{L}_{pe}^+ &= \left\{ x: \mathbb{R}_+ \to \mathbb{R}_+^n \;\middle|\; \|x\|_{L_p,[0,N]} < \infty, \;\forall N \in \mathbb{R}_{+} \right\},
\end{aligned}
\end{equation}
\begin{equation}
\begin{aligned}
\mathcal{L}_{pe}^{0} &= \left\{ x \in \mathcal{L}_{pe} \;\middle|\; \lim_{t \to \infty} x(t) = 0 \right\},\\
\mathcal{L}_{pe}^{0+} &= \left\{ x \in \mathcal{L}_{pe}^+ \;\middle|\; \lim_{t \to \infty} x(t) = 0  \right\}.
\end{aligned}
\end{equation}
\section{Linear time-invariant systems}\label{sec:LTI}
In this section, we consider in \eqref{P_main}, an LTI system 
\begin{equation}\label{eq:LTI}
\begin{aligned}
\dot{x}(t) &= Ax(t) + Bu(t), \\
y_i(t) &= C_ix(t) + D_iu(t)   
\end{aligned}
\end{equation}
where the matrices are of appropriate dimension. For simplicity, we denote $\Sigma_i = (A,B,C_i,D_i), i \in \{1,2,\dots,N\}$. We next establish the following. 
\begin{assumption}\label{ass:stable}
It holds that $\max_i \Re(\lambda_i(A)) < 0 \hfill \triangleleft$
\end{assumption}
\begin{assumption}\label{ass:ctrb}
The tuple $(A,B)$ is controllable. $\hfill \triangleleft$
\end{assumption}
Since quadratic functions are well studied in the literature for LTI systems, we adopt the following loss function in the remainder of this section $\ell(y,S) = y^\top S y$.
Then, we are interested in solving the optimization problem 
\begin{equation}\label{eq:main:LTI}
\begin{aligned}
    \sup_{u \in \mathcal{U}} \;\; 
    & \;\frac{1}{\lvert \mathcal{P} \rvert}
    \sum_{i \in \mathcal{P}} \left[ \int_{0}^{\infty} y_i(t)^\top Q_i y_i(t) \, dt \right] \\
    \text{s.t.} \;\;
    & \; \int_{0}^{\infty} y_j(t)^\top R_j y_j(t) \, dt \le \delta_j, 
    \quad \forall j \in \mathcal{M},\\
    & \; \text{System dynamics \eqref{eq:LTI}},
\end{aligned}
\end{equation}
Before we present the results of this section, we present some preliminaries. Given an output $y_i$, with $i \in \{1,\dots,N\}$, let $y_{ik}$, $k \in \{1,\dots,p_i\}$, denote the components of the output vector.
\begin{defn}[Finite zeros]
Given $\Sigma_i$, $\beta \in \mathbb{C}$ is an invariant zero of $\Sigma_i$ if $\exists\;x_0 \in \mathbb{R}^n, g \in \mathbb{R}^m$ such that 
\begin{equation}\label{eq:zero}
R(\beta,x_0,g,\Sigma_i) := 
    \begin{bmatrix}
        \beta I-A & B\\
        C_i & -D_i
    \end{bmatrix}\begin{bmatrix}
        x_0\\ g
    \end{bmatrix} = \begin{bmatrix}
        0 \\ 0
    \end{bmatrix}.
\end{equation}
The zero is defined to be unstable if $\Re(\beta) > 0. \hfill\triangleleft$
\end{defn}
\begin{defn}[Relative degree]
Consider a strictly proper SISO system $\Sigma$ with realization $(A, B, C, 0)$. Let 
$r = \min \left\{ k \in \mathbb{Z}_+ \;\middle|\; C A^{k-1} B \neq 0 \right\}$.
Then the system $\Sigma$ has relative degree $r$. $\hfill \triangleleft$
\end{defn}
\begin{theorem}\label{thm:LTI:L2e}
Suppose that $\mathcal{U} = \mathcal{L}_{2e}$ and $N=2$. Without loss of generality, let $\mathcal{P}=\{1\}$ and $\mathcal{M} = \{2\}$. Then the following statements hold:
\begin{enumerate}[label=(\alph*)]
\item The value of the optimization problem \eqref{eq:main:LTI} is equal to the value of the SDP
\begin{equation}\label{eq:main:SDP}
\begin{aligned}
\min_{\gamma, P} & \; \gamma \delta_2\\
\text{s.t.}&\; 
\begin{bmatrix}
A^\top P + PA & PB \\
B^\top P & 0
\end{bmatrix} + \Gamma \preceq 0,\\
&\; \gamma \ge 0,\; P \in \mathbb{R}^{n \times n},\; P = P^\top \succeq 0
\end{aligned}
\end{equation}
where\; $\Gamma \triangleq \begin{bmatrix}
    C_1^\top\\
    D_1^\top
\end{bmatrix}Q_1\begin{bmatrix}
    C_1 & D_1
\end{bmatrix}
- \gamma \begin{bmatrix}
    C_2^\top\\
    D_2^\top
\end{bmatrix}R_2\begin{bmatrix}
    C_2 & D_2
\end{bmatrix}$
\item The value of \eqref{eq:main:LTI} is unbounded if the unstable finite zeros of $\Sigma_2$ are not the zeros of $\Sigma_1$.
\end{enumerate}
\end{theorem}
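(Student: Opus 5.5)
For part (a) I will dualize the constrained problem with a scalar multiplier and then use dissipativity. Write $J_1(u)=\int_0^\infty y_1^\top Q_1y_1\,dt$ and $J_2(u)=\int_0^\infty y_2^\top R_2y_2\,dt$. The Lagrangian dual of \eqref{eq:main:LTI} is $\inf_{\gamma\ge 0}\sup_{u}\,[J_1(u)-\gamma J_2(u)+\gamma\delta_2]$.

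Both $J_1$ and $J_2$ are quadratic forms in $u$, because the map $u\mapsto y_i$ is linear with $x(0)=0$. With a single quadratic constraint, Yakubovich's S-lemma (lossless S-procedure, in its infinite-dimensional Megretski--Treil form) gives zero duality gap. Slater's condition holds at $u=0$, since $J_2(0)=0<\delta_2$. I will also need that the family of inputs is closed under scaling and that the quadratic forms are well defined on it.

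The inner supremum is finite, and then equal to zero, exactly when $J_1(u)-\gamma J_2(u)\le 0$ for all $u$. By homogeneity, otherwise $u\mapsto cu$ drives it to $+\infty$. Hence the dual value is $\inf\{\gamma\delta_2:\ J_1-\gamma J_2\le 0\ \forall u\}$.

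Next I translate the condition $J_1-\gamma J_2\le 0$ into the LMI. The supply rate is $s(x,u)=-(\,y_1^\top Q_1y_1-\gamma y_2^\top R_2y_2)$.

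For sufficiency, suppose $P\succeq0$ satisfies the LMI. Then $\frac{d}{dt}x^\top Px\le -[x;u]^\top\Gamma[x;u]$. Integrating on $[0,T]$ with $x(0)=0$ gives $\int_0^T(y_1^\top Q_1y_1-\gamma y_2^\top R_2y_2)\,dt\le -x(T)^\top Px(T)\le 0$ for every $T$. This holds for all $u\in\Lte$, so the truncated inequality gives the bound even for inputs whose costs are only defined in the limit.

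For necessity, I will use the available-storage construction $V(x_0)=\sup_{u,T}\int_0^T(\,y_1^\top Q_1y_1-\gamma y_2^\top R_2y_2)\,dt$ starting from $x_0$. Nonnegativity of the truncated supply over $\Lte$ makes $V$ finite, by controllability (Assumption~\ref{ass:ctrb}), and nonnegative. It is quadratic, $V=x^\top Px$, by the LQ structure, and it satisfies the dissipation inequality, which is exactly the LMI (KYP). This is where the choice $\Uc=\Lte$ matters: the truncated horizons make the supply nonnegative on every finite horizon, which forces $P\succeq0$. An $\Lt^0$ requirement would give only a cyclo-dissipativity condition with $P$ indefinite.

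Combining the two directions yields the SDP \eqref{eq:main:SDP}. I expect the main obstacle to be making the S-lemma and the necessity direction rigorous on $\Lte$ rather than $\Lt$. I plan to pass through finite horizons $[0,T]$, where the problem lives in a Hilbert space and the S-lemma applies, and then take $T\to\infty$, using monotonicity of the truncated values.

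For part (b), let $\beta$ with $\Re\beta>0$ be a zero of $\Sigma_2$ with direction $(x_0,g)$ that is not a zero of $\Sigma_1$. I will take the input $u(t)=\Re(g e^{\beta t})\in\Lte$, preceded by a steering phase: by controllability, a finite-energy input on $[0,t_0]$ drives $x$ to $\Re x_0$. Thereafter $x(t)=\Re(x_0e^{\beta t})$ and $y_2\equiv0$. Scaling the steering phase keeps $J_2$ below $\delta_2$.

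Since $\beta$ is not a zero of $\Sigma_1$, we have $C_1x_0+D_1g\neq0$. Then $y_1=\Re((C_1x_0+D_1g)e^{\beta t})$ grows exponentially, so $J_1=\infty$ while the constraint holds. A subtle point is to check that $\Re(\cdot)$ of an exponentially growing vector with oscillation still has a divergent integral. This holds because its squared norm averages to a positive multiple of $e^{2\Re\beta\, t}$ over each period.

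Equivalently, I can argue through the SDP: any feasible $\gamma$ would make the LMI evaluated along $(x_0,g)$ give $\|C_1x_0+D_1g\|_{Q_1}^2\le -2\Re\beta\,x_0^*Px_0\le0$, a contradiction whenever $Q_1\succ0$ on that direction. This confirms that no finite $\gamma$ exists.
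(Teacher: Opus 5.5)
Your part (a) follows the paper's route: single-constraint duality with Slater at $u=0$, then the dissipation inequality with $P\succeq 0$. Your sufficiency direction is correct and shows that \eqref{eq:main:SDP} upper-bounds \eqref{eq:main:LTI}.

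\textbf{Gap in necessity.} The necessity direction fails exactly at the step you flag. The dual constraint is the infinite-horizon inequality $J_1(u)\le\gamma J_2(u)$, and it only has content for $u\in\Lte$ with $J_2(u)<\infty$. It does not give $\int_0^T(y_1^\top Q_1y_1-\gamma y_2^\top R_2y_2)\,dt\le 0$ for each $T$. The reason is that the primal also charges the tail $\int_T^\infty y_2^\top R_2y_2\,dt$ of every continuation after $T$, and that tail is generally of the same order as the cost accumulated on $[0,T]$. For the same reason, your limit of problems constrained only on $[0,T]$ equals the SDP value, not the primal value.

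\textbf{Counterexample.} The step cannot be repaired, because (a) is false as stated. Take $\dot x=-x+u$, $y_1=u$, $y_2=(u-2x,\,2x)$, $Q_1=1$, $R_2=I_2$; this system is Hurwitz and controllable. Finite $J_2$ forces $u=y_{21}+y_{22}\in\Lt$, hence $x(t)\to 0$. Since $u=\dot x+x$, we get $\int_0^T(u-2x)^2dt=\int_0^Tu^2dt-2x(T)^2$. Therefore $J_2=J_1+4\int_0^\infty x^2dt\ge J_1$, and the value of \eqref{eq:main:LTI} is $\delta_2$.

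The LMI, however, reads $\left[\begin{smallmatrix}-2p-8\gamma & p+2\gamma\\ p+2\gamma & 1-\gamma\end{smallmatrix}\right]\preceq 0$. For $p\ge 0$ it is infeasible whenever $\gamma<2$: the $(2,2)$ entry is positive for $\gamma<1$, and the determinant $-p^2-2p(1+\gamma)+4\gamma(\gamma-2)$ is negative for $1\le\gamma<2$. The point $p=0$, $\gamma=2$ is feasible, so the SDP value is $2\delta_2$. Consistently, the right-half-plane FDI fails at $s=3$. The input $u=e^{3t}$ on $[0,T]$ drives $\int_0^Ty_1^2\,dt\big/\int_0^T\|y_2\|^2dt\to 2$, but the ratio drops to about $1/2$ once the tail is counted. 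The value $\gamma=1$ is certified only by the indefinite $p=-2$, i.e.\ by cyclo-dissipativity. The paper's one-line appeal to the dissipation inequality makes the same leap, so both arguments establish only the upper bound.

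\textbf{Part (b).} Your direct construction is correct and is more robust than the paper's route. The paper infers unboundedness of \eqref{eq:main:LTI} from infeasibility of \eqref{eq:main:SDP}, which relies on the direction of (a) that fails above. Your LMI computation $\|C_1x_0+D_1g\|_{Q_1}^2\le-2\Re(\beta)\,x_0^HPx_0$ is essentially the paper's argument, and on its own it only proves the SDP is infeasible. Some details need fixing:
\begin{itemize}
\item With the sign convention of \eqref{eq:zero}, the input is $-\Re(ge^{\beta t})$ and $y_1=\Re((C_1x_0-D_1g)e^{\beta t})$.
\item You need $Q_1^{1/2}(C_1x_0-D_1g)\neq 0$. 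The hypothesis that $\beta$ is not a zero of $\Sigma_1$ does not give this when $Q_1$ is singular; with $Q_1=0$ the claim is false. The weighted $\Sigma_p$ of Theorem~\ref{thm:LTI:lower} is the right object.
\item For real $\beta$, choose $(x_0,g)$ real, since your period-averaging argument needs $\Im(\beta)\neq 0$.
\item Scale the whole input, not only the steering phase.
\end{itemize}
The steering phase is in fact unnecessary. Starting from $x(0)=0$, the discrepancy in $y_2$ is $-C_2e^{At}\Re(x_0)$, which lies in $\Lt$ because $A$ is Hurwitz.
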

\vspace{-20pt}
\begin{pf}
The optimization problem \eqref{eq:main:LTI} can be formulated as a convex dual
%
%
%
%
\begin{equation}\label{pf:0}
\begin{aligned}
&\inf_{\gamma \geq 0} \;\gamma \delta_2\\
&\text{s.t.} \;\scalebox{0.85}{$\displaystyle \int_{0}^{\infty} \left(  \gamma y_2^\top(t) R_2 y_2(t) - y_1^\top(t) Q_1 y_1(t) \right) dt \geq 0,\;\forall u \in \mathcal{L}_{2e}$}
\end{aligned} 
\end{equation}
Since $N=2$, the convex dual is equivalent to the primal problem \cite[Theorem 4.2.1]{petersen2012robust}.
The equivalence between the primal problem~\eqref{eq:main:LTI} and its dual~\eqref{pf:0} requires a strict feasibility condition: there must exist $u \in \mathcal{L}_{2e}$ that strictly satisfies the constraint. Taking $u = 0$ yields $x \equiv 0$ (since $x(0) = 0$), hence $\int_0^\infty y_2^\top R_2 y_2 \, dt = 0 < \delta_2$. Thus strong duality holds and~\eqref{pf:0} is equivalent to~\eqref{eq:main:LTI}.

Let us now define the supply rate as $s(u,t) \triangleq  \gamma y_2(t)^\top R_2 y_2(t) - y_1(t)^\top Q_1 y_1(t)$. Then, using the dissipation inequality \cite[Theorem 8.4.5]{trentelman1991dissipation}, it follows that \eqref{pf:0} is equivalent to \eqref{eq:main:SDP}. This completes the proof of part (a). For part (b), the proof follows the same approach as Theorem~\ref{thm:LTI:lower}(b) and is thus omitted. $\hfill \blacksquare$
\end{pf}
Theorem~\ref{thm:LTI:L2e} presents the first contribution of this paper and provides an exact SDP to determine the value of \eqref{eq:main:sup} when $N=2$. Note that Theorem~\ref{thm:LTI:L2e}(b) provides a sufficient condition for the optimization problem to be unbounded. However, the theorem addresses only finite zeros. We next provide a specific set of conditions based on zeros at infinity (relative degree) under which the optimization problem~\eqref{eq:main:SDP} becomes infeasible. 
\begin{theorem}\label{thm:inf:zero}
Suppose that $\mathcal{U} = \mathcal{L}_{2e}$ and $N = 2$. Without loss of generality, let $\mathcal{P} = \{1\}$ and $\Mc = \{2\}$. 
Assume that $m = 1$, $p_1 \ge p_2$, $D_1=0$ and $D_2=0$. 
Let $r_{ij}$, with $i \in \{1,2\}$ and $j \in \{1,\dots,p_i\}$, denote the relative degree of the system from the input to the output $y_{ij}$, and define
\begin{equation*}
r_1^{\min} = \min_{j \in \{1,\dots,p_1\}} r_{1j},
\qquad
r_2^{\min} = \min_{j \in \{1,\dots,p_2\}} r_{2j}.
\end{equation*}
Then the SDP~\eqref{eq:main:SDP} is infeasible if $r_1^{\min} < r_2^{\min}$.
\end{theorem}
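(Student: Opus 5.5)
The plan is to argue by contradiction: assume $(\gamma,P)$ is feasible for \eqref{eq:main:SDP}, and test the LMI on the Krylov vectors $A^{k-1}B$. Since $D_1=D_2=0$, the bottom-right block of the LMI matrix is $0$ (a scalar, because $m=1$). A negative semidefinite matrix with a zero diagonal entry must have the corresponding row and column equal to zero, so the LMI forces $PB=0$. It also forces the top-left block to satisfy
\begin{equation*}
M \triangleq A^\top P + PA + C_1^\top Q_1 C_1 - \gamma C_2^\top R_2 C_2 \preceq 0 .
\end{equation*}
We will also use the following fact: for $M\preceq 0$, $x^\top M x = 0$ implies $Mx = 0$.

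Set $r \triangleq r_1^{\min} < r_2^{\min}$. By the definition of relative degree applied componentwise, $C_2 A^{k-1}B = 0$ for all $k \le r$, because every output of $\Sigma_2$ has relative degree at least $r_2^{\min} > r$. Likewise $C_1 A^{k-1}B = 0$ for $k \le r-1$, while $C_1 A^{r-1}B \neq 0$. We then prove by induction on $k=0,1,\dots,r-1$ that $PA^{k}B = 0$.

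The base case $k=0$ is $PB=0$. For the inductive step, take $x = A^{k-1}B$ with $1\le k\le r-1$, and assume $PA^{k-1}B=0$, i.e. $Px=0$.
\begin{itemize}
\item The term $x^\top(A^\top P+PA)x = 2(Px)^\top Ax$ vanishes.
\item The terms $C_1x$ and $C_2x$ vanish because $k-1 \le r-2$.
\end{itemize}
Hence $x^\top M x = 0$, so $Mx = 0$. Expanding $Mx$, every term except $PAx$ vanishes, since $A^\top P x = 0$, $C_1 x = 0$ and $C_2 x = 0$. This gives $PA^{k}B = PAx = 0$.

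Finally, take $x = A^{r-1}B$. By the induction, $Px=0$, so the $P$-terms of $x^\top M x$ vanish. Also $C_2 x = 0$ because $r < r_2^{\min}$. Therefore
\begin{equation*}
x^\top M x = (C_1A^{r-1}B)^\top Q_1 (C_1 A^{r-1}B).
\end{equation*}
This is strictly positive as soon as $Q_1 C_1 A^{r-1}B \neq 0$, which holds for instance when $Q_1 \succ 0$, since $C_1A^{r-1}B\neq 0$. Strict positivity contradicts $M\preceq 0$, so the SDP is infeasible for every $\gamma\ge 0$.

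The main obstacle I expect is exactly this last nondegeneracy point. The relative degree is defined through $C_1$, but positivity is needed through $Q_1$. I would therefore either assume $Q_1\succ 0$, or define $r_{1j}$ for the weighted output $Q_1^{1/2}y_1$, and state this explicitly. The hypothesis $p_1\ge p_2$ is not needed in this argument. The remaining steps are routine bookkeeping of which Markov parameters vanish.
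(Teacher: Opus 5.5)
Your proof is correct, and it takes a genuinely different route from the paper's. The paper never works with the LMI directly. It passes to the time-domain dual constraint and invokes the necessary frequency-domain inequality $G_1(s)^H G_1(s) - \gamma G_2(s)^H G_2(s) \le 0$ on $\Re(s)\ge 0$. It then restricts to real $s\to\infty$ and uses $s^{r_{ij}}g_{ij}(s)\to K_{ij}\neq 0$ to show that $s^{2r_1^{\min}}\Phi(s)\to\sum_{i:\,r_{1i}=r_1^{\min}}K_{1i}^2>0$. Infeasibility of the SDP is finally inferred through the dual--SDP equivalence of Theorem~\ref{thm:LTI:L2e}.

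You instead argue on the LMI itself. The zero $(2,2)$ block forces $PB=0$, and the kernel property of $M\preceq 0$ propagates $PA^kB=0$ along the Krylov chain. The first nonzero Markov parameter $C_1A^{r-1}B$ then makes $x^\top Mx$ strictly positive. This vector is exactly the paper's constants $K_{1i}$, with zeros in the channels of larger relative degree.

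Your route buys self-containedness and some generality. It uses only two elementary facts about negative semidefinite matrices and needs neither the duality equivalence nor the FDI necessity result. It also uses none of stability, controllability, $P\succeq 0$, or $p_1\ge p_2$; the last of these is unused in the paper's proof as well. It even extends verbatim to $m>1$ if relative degree is read as the index of the first nonzero Markov parameter block, since the zero $(2,2)$ block forces $PB=0$ for any $m$.

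The paper's route, in exchange, reads as a transfer-function gain comparison: for large real $s$ the objective channel dominates the constraint channel by a factor growing like $s^{2(r_2^{\min}-r_1^{\min})}$. That reading speaks directly to the dual, and hence the primal, problem rather than only to the SDP.

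Your caveat about $Q_1$ is well founded. With only $Q_1\succeq 0$ the statement is false: for $Q_1=0$, the pair $\gamma=0$, $P=0$ is feasible. The paper's proof silently sets $Q_1=R_2=I$ in its frequency-domain inequality. It therefore relies on the same nondegeneracy condition $Q_1C_1A^{r_1^{\min}-1}B\neq 0$ that you make explicit.
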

\vspace{-20pt}
\begin{pf}
The optimization problem \eqref{eq:main:LTI} can be formulated as a convex dual
\begin{equation}\label{eq:pf:1}
\begin{aligned}
\inf_{\gamma \geq 0} & \; \gamma \delta_2\\
\text{s.t.} & \; \scalebox{0.8}{$\displaystyle\int_{0}^{\infty} x^\top(t) \left( C_1^\top Q_1 C_1 - \gamma C_2^\top R_2 C_2 \right) x(t) \, dt \leq 0, \forall u \in \mathcal{L}_{2e}$}
\end{aligned} 
\end{equation}
where the dynamics satisfy \eqref{eq:LTI}. For this special case with $N=2$, the dual problem \eqref{eq:pf:1} is equivalent to the primal problem \eqref{eq:main:LTI} \cite[Theorem 4.2.1]{petersen2012robust}. Observe that the left-hand side of the constraint in \eqref{eq:pf:1} becomes strictly positive when $\gamma =0$, which violates the constraint. Thus, for \eqref{eq:pf:1} to be feasible, it is necessary that $\gamma > 0$. Let us call this observation $O1$. A necessary condition for the constraint in \eqref{eq:pf:1} to hold is the Frequency Domain Inequality (FDI) \cite{willems1974existence}
\begin{equation}\label{eq:pf:2}
G_1(s)^H G_1(s) - \gamma G_2(s)^H G_2(s) \leq 0
\end{equation}
holds $\forall s \in \mathbb{C}$, $\Re(s) \geq 0$ where $G_1(s) = \begin{bmatrix}
    g_{11}(s) & \dots & g_{1p_1}(s)
\end{bmatrix}^\top$, $G_2(s) = \begin{bmatrix}
    g_{21}(s) & \dots & g_{2p_2}(s)
\end{bmatrix}^\top$.
%
For $s \in \mathbb{R}^+$
the necessary FDI \eqref{eq:pf:2} reduces to
\begin{equation}\label{eq:pf:3}
    \Phi(s) = \sum_{i=1}^{p_1} g_{1i}(s)^2 - \gamma \sum_{j=1}^{p_2} g_{2j}(s)^2 \leq 0, \quad \forall s \in \mathbb{R}^+.
\end{equation}
In the remainder of the proof we exhibit a single $\bar{s} \in \mathbb{R}^+$ such that $\Phi(s) > 0$ for all $s > \bar{s}$, contradicting \eqref{eq:pf:3} and hence the necessary condition for feasibility.

Since each $g_{ij}$ is a proper transfer function with relative degree $r_{ij}$, by the definition of relative degree there exist constants $K_{ij} \neq 0$ such that
\begin{equation}\label{eq:a}
    \lim_{s \to \infty} s^{r_{ij}} g_{ij}(s) = K_{ij}.
\end{equation}
Multiplying $\Phi(s)$ by $s^{2 r_1^{\min}} > 0$, define $\Psi(s) = $
\begin{equation}\label{eq:psi}
s^{2 r_1^{\min}} \Phi(s)
= \underbrace{\sum_{i=1}^{p_1} s^{2 r_1^{\min}} g_{1i}(s)^2}_{=:\, A(s)}
- \gamma \underbrace{\sum_{j=1}^{p_2} s^{2 r_1^{\min}} g_{2j}(s)^2}_{=:\, B(s)}.
\end{equation}
Consider first $A(s)$. For each $i \in \{1,\dots,p_1\}$, write
\begin{equation}
s^{2 r_1^{\min}} g_{1i}(s)^2
= s^{-2(r_{1i} - r_1^{\min})} \left( s^{r_{1i}} g_{1i}(s) \right)^2 .
\end{equation}
By \eqref{eq:a}, $\left( s^{r_{1i}} g_{1i}(s) \right)^2 \to K_{1i}^2$ as $s \to \infty$. The exponent $r_{1i} - r_1^{\min} \geq 0$, so $s^{-2(r_{1i} - r_1^{\min})} \to 1$ if $r_{1i} = r_1^{\min}$ and $s^{-2(r_{1i} - r_1^{\min})} \to 0$ if $r_{1i} > r_1^{\min}$. Then by product rule, 
\begin{equation}
\lim_{s \to \infty} s^{2 r_1^{\min}} g_{1i}(s)^2 =
\begin{cases}
K_{1i}^2, & r_{1i} = r_1^{\min},\\[2pt]
0, & r_{1i} > r_1^{\min}.
\end{cases}
\end{equation}
Summing the finitely many terms, we get $\lim_{s \to \infty} A(s) = \sum_{i \,:\, r_{1i} = r_1^{\min}} K_{1i}^2 >\; 0.$ Let this be observation $O2$. Consider now $B(s)$. For each $j \in \{1,\dots,p_2\}$, write
\begin{equation}
s^{2 r_1^{\min}} g_{2j}(s)^2
= s^{-2(r_{2j} - r_1^{\min})} \left( s^{r_{2j}} g_{2j}(s) \right)^2 .
\end{equation}
By hypothesis $r_{2j} \geq r_2^{\min} > r_1^{\min}$, so the exponent satisfies $r_{2j} - r_1^{\min} > 0$ and hence $s^{-2(r_{2j} - r_1^{\min})} \to 0$. By \eqref{eq:a}, $\left( s^{r_{2j}} g_{2j}(s) \right)^2 \to K_{2j}^2$, a finite constant. By the product rule for limits, each term tends to $0$, and summing the finitely many terms, we get $\lim_{s \to \infty} B(s) = 0$. Let this be observation $O3$.

By combining $O1$, $O2$ and $O3$ we obtain $\lim_{s \to \infty} \Psi(s) = \sum_{i \,:\, r_{1i} = r_1^{\min}} K_{1i}^2 > 0$. By the definition of the limit, there exists $\bar{s} \in \mathbb{R}^+$ such that $\Psi(s) > 0$, and therefore $\Phi(s) = s^{-2 r_1^{\min}} \Psi(s) > 0$, for all $s > \bar{s}$. This contradicts the necessary condition \eqref{eq:pf:3}. Since \eqref{eq:pf:3} is necessary for the feasibility of \eqref{eq:pf:1}, equivalently \eqref{eq:main:SDP} is infeasible. $\hfill \blacksquare$
\end{pf}
Theorem~\ref{thm:inf:zero} presents the second contribution of this paper and provides a sufficient condition, based on relative degrees, under which the optimization problem \eqref{eq:main:SDP} becomes infeasible. We next demonstrate the infeasibility with a numerical example.
\begin{example}\label{exmp:1}
Consider the stable dynamical systems:
\begin{equation}
    G_1(s) = \begin{bmatrix}
    \frac{1}{s+4}\\       
    \frac{1}{2s+3}
    \end{bmatrix},
    \quad
    G_2(s) = \begin{bmatrix}
        \frac{1}{s^2+4s+2}\\
        \frac{1}{(s+6)(s+7)(s+8)}
    \end{bmatrix}.
\end{equation}
with $Q_1=I_2$ and $R_2=I_2$. The relative degrees are $r_{11} = 1$, $r_{12} = 1$, $r_{21} = 2$, and $r_{22} = 3$. Since  ${r}_1^{\min} = 1< 2 = {r}_2^{\min}$, the conditions of Theorem~\ref{thm:inf:zero} are satisfied. Consequently, the optimization problem \eqref{eq:main:LTI} is unbounded. While \eqref{eq:main:LTI} can be formulated as the SDP \eqref{eq:main:SDP}, numerical solvers will either report infeasibility or encounter numerical difficulties.$\hfill \triangleleft$
\end{example}

Using the conditions in Theorem~\ref{thm:inf:zero}, the structural limitations of the system can be identified. However, once these conditions are rectified, let's say at the IT layer, a system operator might be interested in quantifying security. To this end, we propose a method in Theorem~\ref{thm:int}. In other words, we propose a constructive approach to modify the closed-loop dynamics so that the optimization problem \eqref{eq:main:LTI} becomes feasible.
\begin{theorem}\label{thm:int}
Suppose that $\mathcal{U} = \mathcal{L}_{2e}$ and $N = 2$. Without loss of generality, let $\mathcal{P} = \{1\}$ and $\mathcal{M} = \{2\}$. 
Assume that $m = 1$, $p_1 \ge p_2$, $D_1=0$ and $D_2=0$. 
Let $r_{ij}$, with $i \in \{1,2\}$ and $j \in \{1,\dots,p_i\}$, denote the relative degree of the system from the input to the output $y_{ij}$, and let $r_2^{\min} = \min_{j \in \{1,\dots,p_2\}} r_{2j}$.
Now consider the following modified closed-loop dynamics:
\begin{equation}\label{eq:CL:modify}
    \begin{aligned}
        \dot{\bar{x}}(t) &= \bar{A} \bar{x}(t) + \bar{B}u(t)\\
        \bar{y}_1(t) &= \bar{C}_1\bar{x}(t),\;
        \bar{y}_2(t) = \bar{C}_2\bar{x}(t),
    \end{aligned}
\end{equation}
where $\bar{y}_{2}(t) = y_{2}(t)$. For all $j \in \{1,\dots,p_1\}$, we define
\begin{equation}\label{eq:bar}
    \bar{y}_{1j}(t) = \begin{cases}
    y_{1j}(t) & \text{if } r_{1j} \geq r_2^{\min}\\[0.2cm]
    \underbrace{\int_0^t\!\cdots\!\int_0^{\tau_{1}}}_{\tilde{r}_{1j}\ \text{times}}
   y_{1j}(\tau_0)\, d\tau_0\,\cdots d\tau_{\tilde{r}_{1j}-1},&\text{otherwise,}
\end{cases}
\end{equation}
where $\tilde{r}_{1j} = r_2^{\min} - r_{1j}$. Let $\bar{r}_{ij}$, with $i \in \{1,2\}$ and $j \in \{1,\dots,p_i\}$, denote the relative degree of the modified closed-loop system from the input to the output $\bar{y}_{ij}$. Then for \eqref{eq:CL:modify}, it holds that $\bar{r}_1^{\min} \geq \bar{r}_2^{\min}$.
\end{theorem}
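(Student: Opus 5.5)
The approach is to compute the relative degree of every channel of the modified system \eqref{eq:CL:modify} directly. It is convenient to work both in the frequency domain, where an $\tilde r$-fold integral from zero initial state is multiplication by $s^{-\tilde r}$, and with an explicit state-space realization, which the definition of relative degree requires. First, the constraint channels are unchanged, since $\bar y_2 = y_2$. Hence $\bar r_{2j} = r_{2j}$ for every $j$, and therefore $\bar r_2^{\min} = r_2^{\min}$.

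Next I would split the performance channels according to \eqref{eq:bar}. If $r_{1j} \ge r_2^{\min}$, then $\bar y_{1j} = y_{1j}$ and $\bar r_{1j} = r_{1j} \ge r_2^{\min}$. Otherwise, $\bar y_{1j}$ is obtained by passing $y_{1j}$ through a chain of $\tilde r_{1j}$ integrators. I would realize this by augmenting the state with $\tilde r_{1j}$ integrator states $z_{j,1},\dots,z_{j,\tilde r_{1j}}$ satisfying $\dot z_{j,1} = C_{1j}x$ and $\dot z_{j,k} = z_{j,k-1}$, and setting $\bar y_{1j} = z_{j,\tilde r_{1j}}$. This gives $\bar A$ block lower-triangular with $A$ in the top-left block, $\bar B = [B^\top\ 0]^\top$, and rows of $\bar C_1$ selecting either $[C_{1j}\ 0]$ or the last integrator state of chain $j$.

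In the integrator case, $\bar g_{1j}(s) = s^{-\tilde r_{1j}} g_{1j}(s)$, so $\lim_{s\to\infty} s^{r_{1j}+\tilde r_{1j}}\bar g_{1j}(s) = K_{1j}\neq 0$, using \eqref{eq:a} from the proof of Theorem~\ref{thm:inf:zero}. This gives $\bar r_{1j} = r_{1j}+\tilde r_{1j} = r_2^{\min}$. Since the paper defines relative degree through Markov parameters, I would confirm the same fact in state space. By induction on $k$, $\bar C_{1j}\bar A^{k-1}\bar B = 0$ for $k \le \tilde r_{1j}$, because the input must first propagate through $x$ before entering the chain. Each further application of $\bar A$ shifts one step along the chain, so $\bar C_{1j}\bar A^{\tilde r_{1j}+k-1}\bar B = C_{1j}A^{k-1}B$. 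The first nonzero Markov parameter therefore appears at index $\tilde r_{1j}+r_{1j} = r_2^{\min}$. Combining both cases gives $\bar r_{1j} \ge r_2^{\min} = \bar r_2^{\min}$ for all $j$, hence $\bar r_1^{\min} \ge \bar r_2^{\min}$.

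I expect the main obstacle to be the Markov-parameter bookkeeping in the augmented realization, namely verifying the shift identity when several integrator chains of different lengths share the same $\bar A$. I would handle this by writing $\bar A$ in block form and proving the identity one chain at a time, since the chains are decoupled from one another. A secondary remark worth making is that the integrators make $\bar A$ only marginally stable, so Assumption~\ref{ass:stable} no longer holds. This does not affect the relative-degree claim, but I would note it explicitly.
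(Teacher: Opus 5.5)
Your proposal is correct and follows essentially the same route as the paper. Both arguments note that the constraint channels are unchanged, so $\bar r_2^{\min}=r_2^{\min}$, and that in the integrated case $\bar g_{1j}(s)=s^{-\tilde r_{1j}}g_{1j}(s)$ raises the relative degree to exactly $r_2^{\min}$; the paper counts polynomial degrees $(n_{1j}+\tilde r_{1j})-m_{1j}$ where you use the limit $K_{1j}\neq 0$. Your extra Markov-parameter check on the augmented realization, $\bar C_{1j}\bar A^{\tilde r_{1j}+k-1}\bar B=C_{1j}A^{k-1}B$ with zeros before, is a valid supplement that ties the argument directly to the paper's Markov-parameter definition of relative degree, which the paper's own proof does not do explicitly.
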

\vspace{-15pt}
\begin{pf}
We establish the proof by showing that the modified closed-loop system \eqref{eq:CL:modify} satisfies $\bar{r}_{1j} \geq r_2^{\min}$ for all $j \in \{1,\dots,p_1\}$. To this end, let us denote the transfer function from $u$ to $y_{ij}$ as
\begin{equation}
    g_{ij}(s) = \frac{b_{m_{ij}} s^{m_{ij}} + b_{m_{ij}-1}s^{m_{ij}-1} + \cdots + b_1 s + b_0}{s^{n_{ij}} + a_{n_{ij}-1}s^{n_{ij}-1} + \cdots + a_1 s + a_0},
\end{equation}
where $r_{ij} = n_{ij} - m_{ij}$ is the relative degree.
By construction, we have $\bar{y}_{2}(t) = y_{2}(t)$ for all $t \geq 0$. Consequently, we have $\bar{r}_{2j} = r_{2j}$ for all $j \in \{1,\dots,p_2\}$, and thus $\bar{r}_2^{\min} = r_2^{\min}$. For the outputs of subsystem 1, we distinguish between two cases:
\textit{Case 1: $r_{1j} \geq r_2^{\min}$.} Here, by construction, we have $\bar{y}_{1j}(t) = y_{1j}(t)$. Consequently, we have $\bar{r}_{1j} = r_{1j} \geq r_2^{\min}$.
\textit{Case 2: $r_{1j} < r_2^{\min}$.} In this case, the output is integrated $\tilde{r}_{1j} = r_2^{\min} - r_{1j}$ times. Applying $\tilde{r}_{1j}$ integrations sequentially results in multiplication of the transfer function by $\frac{1}{s^{\tilde{r}_{1j}}}$. Thus, the transfer function from $u$ to $\bar{y}_{1j}$ is $\bar{g}_{1j}(s)= \frac{1}{s^{\tilde{r}_{1j}}} \cdot g_{1j}(s)$.
The relative degree of this modified transfer function is
$\bar{r}_{1j} = (n_{1j} + \tilde{r}_{1j}) - m_{1j} = (n_{1j} - m_{1j}) + \tilde{r}_{1j} = r_{1j} + \tilde{r}_{1j}$. Substituting $\tilde{r}_{1j}$, we find $\bar{r}_{1j} = r_{1j} + (r_2^{\min} - r_{1j}) = r_2^{\min}$. Combining both cases, we conclude that $\bar{r}_{1j} \geq r_2^{\min}$ for all $j \in \{1,\dots,p_1\}$. Taking the minimum over $j$ yields $\bar{r}_1^{\min} \geq r_2^{\min} = \bar{r}_2^{\min}$, which concludes the proof. $\hfill \blacksquare$
\end{pf}

\begin{rem}
During implementation, the integration in \eqref{eq:CL:modify} can be approximated by a leaky integrator
to avoid stability issues. The relative degree calculation  holds identically for both ideal and leaky integrators.$\hfill \triangleleft$
\end{rem}


We next depict the efficacy of the closed-loop modification proposed in Theorem~\ref{thm:int}.
\begin{example}
Consider the system in Example~\ref{exmp:1}, and let $\delta_2=1$. Since the conditions of Theorem~\ref{thm:inf:zero} are satisfied, the optimal value of \eqref{eq:main:LTI} is unbounded. To make the value bounded for analysis purposes, we apply Theorem~\ref{thm:int}, which indicates that outputs $y_{11}$  and $y_{12}$ must be integrated once. In this example, we consider leaky integrators and modify the transfer function $G_1(s)$ as follows:
    \begin{equation}\label{eq:exmp:2}
    \bar{G}_1(s) = \begin{bmatrix}
    \frac{1}{(s+a_1)(s+4)} &       
    \frac{1}{(s+a_2)(s+a_3)(2s+3)}
    \end{bmatrix}^\top,
\end{equation}
where $a_i, i \in \{1,2,3\}$ is uniformly distributed in the range $[0.1,0.3]$. We then use Theorem~\ref{thm:LTI:L2e} to formulate an equivalent SDP to solve \eqref{eq:main:LTI} under the modified closed-loop dynamics \eqref{eq:exmp:2}, and obtain an optimal value of $28.2017$. $\hfill \triangleleft$
\end{example}

The results presented so far only hold when there is exactly one constraint. From a security standpoint, the result presented so far can be used to quantify attack impact when there is only one detection constraint. However, in large scale systems like power grids, there can be multiple detectors implying multiple constraints. Thus, we next provide two results to determine the value of \eqref{eq:main:LTI} when there are multiple constraints. In Theorem~\ref{thm:LTI:lower}, we provide a method to determine an upper bound of \eqref{eq:main:LTI} when $\mathcal{U} = \mathcal{L}_{2e}$, and in Theorem~\ref{thm:LTI:L2}, we provide a method to determine an upper bound of \eqref{eq:main:LTI} when $\mathcal{U} = \mathcal{L}_2^{0}$.

Before presenting the results, we introduce some notation. Let $\Sigma_p$ ($\Sigma_m$) denote the state space system with input $u$ and output vector $y_p$ ($y_m$), where
\begin{equation}\label{eq:ypym}
y_p = \frac{1}{\sqrt{\vert \mathcal{P} \vert}} \underset{i \in \mathcal{P}}{\operatorname{col}} \left( \sqrt{Q_i} y_i \right),\;
y_m = \underset{i \in \mathcal{M}}{\operatorname{col}}\left(\sqrt{R_j} y_j \right).
\end{equation}
\begin{theorem}\label{thm:LTI:lower}
Suppose that \;$\mathcal{U} = \mathcal{L}_{2e}$. Then the following statements hold: 
\begin{enumerate}[label=(\alph*)]
    \item The value of the optimization problem \eqref{eq:main:LTI} is upper bounded by the value of the SDP \vspace{-10pt}
\begin{adjustwidth}{-\leftmargin}{0pt}
\begin{equation}\label{eq:SDP:lower}
\begin{aligned}
    \min_{\bm{\gamma} \geq 0, P} & \;  \sum_{j \in \mathcal{M}} \gamma_j \delta_j \\
    \text{s.t.}&\; \begin{bmatrix}
        A^\top P + PA & PB \\
        B^\top P & 0
    \end{bmatrix} + \Gamma \preceq 0, \;P = P^\top \succeq 0 
\end{aligned}
\end{equation}
\end{adjustwidth}
where $\bm{\gamma} \in \mathbb{R}^{\vert \mathcal{M}\vert}$ denotes the stacked vector of $\gamma_j$, and \vspace{-15pt}
\begin{adjustwidth}{-\leftmargin}{0pt}
\begin{equation}
\Gamma = \frac{1}{\vert \mathcal{P} \vert}\sum_{i \in \Pc} \begin{bmatrix}
        C_i^\top\\
        D_i^\top
    \end{bmatrix}Q_i \begin{bmatrix}
        C_i & D_i
    \end{bmatrix}  - \sum_{j \in \Mc} \gamma_j\begin{bmatrix}
        C_j^\top\\
        D_j^\top
    \end{bmatrix}R_j \begin{bmatrix}
        C_j & D_j
    \end{bmatrix}  
\end{equation}
\end{adjustwidth}
\item The value of the optimization problem \eqref{eq:SDP:lower} is unbounded if $\Sigma_m$ has unstable finite zeros that are not zeros of $\Sigma_p$.
\end{enumerate}
\end{theorem}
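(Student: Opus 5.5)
For part (a) we use weak duality, since strong duality is no longer available with several constraints. Form the Lagrangian of \eqref{eq:main:LTI} with one multiplier $\gamma_j \ge 0$ per constraint $j \in \mathcal{M}$. For any feasible $u \in \mathcal{L}_{2e}$ and any $\bm{\gamma} \ge 0$, the objective is bounded by
\begin{equation*}
\frac{1}{|\mathcal{P}|}\sum_{i\in\mathcal{P}} \int_0^\infty y_i^\top Q_i y_i\,dt
\le \sum_{j\in\mathcal{M}}\gamma_j\delta_j + \int_0^\infty \Big(\frac{1}{|\mathcal{P}|}\sum_{i\in\mathcal{P}} y_i^\top Q_i y_i - \sum_{j\in\mathcal{M}}\gamma_j y_j^\top R_j y_j\Big)dt .
\end{equation*}
This holds because each term $\gamma_j\big(\delta_j - \int_0^\infty y_j^\top R_j y_j\,dt\big)$ is nonnegative. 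Hence, whenever $\bm{\gamma}$ makes the last integral nonpositive for every $u \in \mathcal{L}_{2e}$, the value of \eqref{eq:main:LTI} is at most $\sum_j \gamma_j\delta_j$. Minimizing over such $\bm{\gamma}$ gives an upper bound.

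We then certify nonpositivity of that integral by dissipativity, exactly as in the proof of Theorem~\ref{thm:LTI:L2e}. Take the supply rate $s(u,t) = -\,[x^\top\ u^\top]\,\Gamma\,[x^\top\ u^\top]^\top$ with $\Gamma$ as in the statement. If $P=P^\top\succeq 0$ satisfies the LMI in \eqref{eq:SDP:lower}, then $V(x)=x^\top P x$ satisfies $\dot V \le s$ along trajectories. Integrating over $[0,T]$ and using $x(0)=0$ gives $\int_0^T(\cdot)\,dt \le -x(T)^\top P x(T) \le 0$ for every $T$. Letting $T\to\infty$ yields the required inequality. We only need the sufficiency direction of \cite[Theorem 8.4.5]{trentelman1991dissipation} here, so the gap between the SDP and the true value comes only from the duality step. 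It may be worth remarking that with $|\mathcal{M}|=1$ this recovers Theorem~\ref{thm:LTI:L2e}(a).

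For part (b), suppose $\beta$ with $\Re(\beta)>0$ is a zero of $\Sigma_m$, with $(x_0,g)$ satisfying \eqref{eq:zero} for $\Sigma_m$, but $\beta$ is not a zero of $\Sigma_p$. We show that no $(\bm{\gamma},P)$ is feasible, so the minimum is $+\infty$ (equivalently, the dual integral constraint fails for every $\bm{\gamma}$). Pre- and post-multiplying the LMI by $[x_0^H\ g^H]$ and its conjugate transpose, with $x_0$ complex in general, turns the $P$-block into $x_0^H(A^\top P+PA)x_0 + 2\Re(x_0^H P B g)$. Using $Bg=(A-\beta I)x_0$, which is \eqref{eq:zero} up to the sign convention, this equals $-2\Re(\beta)\,x_0^H P x_0\cdot(\pm 1)$. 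Meanwhile every $\gamma_j$-term vanishes, because $\sqrt{R_j}(C_jx_0 - D_j g)=0$ for all $j\in\mathcal{M}$ (the zero is a zero of the stacked $\Sigma_m$). What remains is $\sign\cdot 2\Re(\beta)x_0^HPx_0 + \|y_p\text{-part}\|^2 \le 0$, where the last term is $\|[\,\cdot\,](x_0,g)\|^2>0$ because $\beta$ is not a zero of $\Sigma_p$.

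The main obstacle is fixing the sign convention in \eqref{eq:zero}. In the paper's form, $(\beta I-A)x_0 + Bg = 0$ together with $C x_0 - D g = 0$ corresponds to the input $-g e^{\beta t}$. We will replace $g$ by $-g$ so that $x(t)=x_0e^{\beta t}$ is generated by $u=g e^{\beta t}$. The quadratic form on the LMI's left-hand side then becomes
\begin{equation*}
2\Re(\beta)\,x_0^H P x_0 + \|y_p\text{-part}\|^2 ,
\end{equation*}
which is strictly positive since $P\succeq 0$. This contradicts the LMI, so the SDP is infeasible.

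If one prefers to avoid the LMI, a time-domain alternative works too. Apply the input $u(t)=g e^{\beta t}$ on $[0,T]$, which lies in $\mathcal{L}_{2e}$. The $y_m$ outputs stay zero apart from a transient that decays because $A$ is Hurwitz (Assumption~\ref{ass:stable}), while the $y_p$ outputs grow like $e^{\Re(\beta)T}$. Scaling shows that the primal value itself is unbounded.
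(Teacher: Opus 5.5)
Your proof is correct. Part (a) is the paper's argument in substance: a Lagrangian relaxation with $\bm{\gamma}\ge 0$, followed by a quadratic storage function. The paper routes this through the epigraph form, the inf--sup rewriting and the dichotomy $\kappa\in\{0,\infty\}$, and then cites the equivalence in \cite[Theorem 8.4.5]{trentelman1991dissipation}. You instead prove only the direction an upper bound needs, by integrating $\dot V\le s$ with $V(x(0))=0$ and $P\succeq 0$. This is more transparent, and it avoids $\infty-\infty$ issues for inputs with infinite output energy. Your aside that the gap then comes ``only from the duality step'' does not follow from sufficiency alone; it needs the converse direction of the cited theorem. The statement does not depend on it.

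Part (b) takes a more direct route than the paper. The paper invokes the closed right-half-plane inequality $G_p(s)^HG_p(s)-G_m(s)^H\Lambda(\bm{\gamma})G_m(s)\preceq 0$ as a black-box necessary condition \cite{willems1974existence}. It then evaluates this at $s_0$ along a direction with $G_m(s_0)x_0=0\neq G_p(s_0)x_0$. You test the LMI itself with the sign-corrected zero direction ($Ax_0+Bg=\beta x_0$, $C_jx_0+D_jg=0$ for $j\in\mathcal{M}$). This gives $2\Re(\beta)\,x_0^HPx_0$ plus a strictly positive $y_p$-term, which contradicts the LMI. That is exactly the computation underlying the frequency-domain condition, so the two proofs share one idea at different levels of abstraction. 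Yours has three advantages: it is self-contained, it works with the state-space definition \eqref{eq:zero} rather than transfer-function directions, and it shows where $P\succeq 0$ and $\Re(\beta)>0$ enter. It does not even need $A$ Hurwitz.

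Only your corrected version is valid, though. With the paper's convention and test vector $[x_0;\,g]$, the $P$-block equals $2\Re\big(x_0^HP(2A-\beta I)x_0\big)$, and the $\gamma_j$-terms involve $C_jx_0+D_jg=2D_jg$. Your first paragraph of (b), with its $(\pm1)$, should therefore be dropped.

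Your time-domain alternative proves more than the statement: it shows the primal \eqref{eq:main:LTI} itself is unbounded. The SDP bound alone cannot give that when $|\mathcal{M}|>1$. However, it works only if the real input $\Re(g e^{\beta t})$ is kept on for all $t\ge 0$. Then $y_m$ reduces to the decaying free transient started from $-\Re(x_0)$ and has finite energy, while the $y_p$ energy is infinite. Truncating at $T$, as your wording suggests, fails: the free response of $y_m$ from $x(T)$ has energy of order $e^{2\Re(\beta)T}$, so scaling can no longer meet the budgets $\delta_j$.
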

\vspace{-15pt}
\begin{pf}
Consider the optimization problem \eqref{eq:main:LTI}
\begin{equation}\label{pf:a:0}
\begin{aligned}
    \sup_{u \in \mathcal{U}} \;\; 
    &\; \mathcal{F}_0(y) \triangleq \frac{1}{\lvert \mathcal{P} \rvert}
    \sum_{i \in \mathcal{P}} \int_{0}^{\infty} y_i(t)^\top Q_i y_i(t) \, dt \\ 
    \text{s.t.} \;\;
    & \; \int_{0}^{\infty} y_j(t)^\top R_j y_j(t) \, dt \le \delta_j, 
    \quad \forall j \in \mathcal{M}
\end{aligned}
\end{equation}
Using the epigraph formulation, \eqref{pf:a:0} can be reformulated as
\begin{equation}\label{pf:a:1}
\inf_{\beta \geq 0} \left\{  \beta \; \big| \; \mathcal{F}_0(y) \leq \beta,\; \forall u \in \mathcal{A}\right\}
\end{equation}
$\mathcal{A} = \left\{ u\in \Lte \,\Big|\, \displaystyle \int_{0}^{\infty} y_j(t)^\top R_j y_j(t) \, dt \leq \delta_j, \forall j \in \mathcal{M} \right\}$. The optimization problem \eqref{pf:a:1} can be interpreted as follows: at optimality, $\mathcal{F}_0(y) \leq \beta$ holds for all $u \in \mathcal{A}$. Let us call this condition $C1$.  

A sufficient condition for $C1$ is that there exists $\bm{\gamma} \geq 0$ such that  
\begin{equation}\label{pf:a:2}
 \beta \geq \mathcal{F}_0(y) + \sum_{j \in \mathcal{M}} \gamma_j \left( \delta_j - \int_{0}^{\infty} y_j(t)^\top R_j y_j(t) \, dt \right)
\end{equation}
holds for all $u \in \Lte$. To see that \eqref{pf:a:2} is sufficient for $C1$, note that whenever $u \in \mathcal{A}$, we have
\begin{equation}\label{pf:a:3} 
\delta_j - \int_{0}^{\infty} y_j(t)^\top R_j y_j(t) \, dt \geq 0, \quad \forall j \in \mathcal{M}
\end{equation}
Since $\bm{\gamma} \geq 0$, \eqref{pf:a:2} implies  
\begin{equation}
\beta - \mathcal{F}_0(y) \geq \sum_{j \in \mathcal{M}} \gamma_j \left( \delta_j - \int_{0}^{\infty} y_j(t)^\top R_j y_j(t) \, dt \right) \geq 0,
\end{equation}
holds $\forall u \in \mathcal{A}$. Therefore, $\mathcal{F}_0(y) \leq \beta$ for all $u \in \mathcal{A}$, as required. Thus, replacing the constraint in \eqref{pf:a:1} with the sufficient condition \eqref{pf:a:2} yields
\begin{equation}\label{pf:a:4}
\inf_{\beta \geq 0, \bm{\gamma}\geq 0} \;\left\{ \beta \; \Big|\; \eqref{pf:a:2} \;\text{holds}\; \forall u \in \Lte 
\right\}
\end{equation}
Since we replaced the original constraint in \eqref{pf:a:1} with a more restrictive sufficient condition in \eqref{pf:a:2}, the optimal value of \eqref{pf:a:4} provides an upper bound on the optimal value of \eqref{pf:a:1}. Let us now rewrite \eqref{pf:a:4} as 
\begin{equation}\label{pf:a:5}
\inf_{\bm{\gamma} \geq 0} \sup_{u \in \Lte} \mathcal{F}_0(y) + \sum_{j \in \mathcal{M}} \gamma_j \left( \delta_j -\int_{0}^{\infty} y_j(t)^\top R_j y_j(t) \, dt \right)
\end{equation}
Equivalently \eqref{pf:a:5} can be written as 
\begin{align}
&\; \inf_{\bm{\gamma} \geq 0} \Big\{  \sum_{j \in \mathcal{M}} \gamma_j \delta_j + \kappa\Big\} \label{pf:a:6}\\
& \kappa = \sup_{u \in \Lte} \mathcal{F}_0(y) - \sum_{j \in \mathcal{M}} \gamma_j \int_{0}^{\infty} y_j(t)^\top R_j y_j(t) \, dt
\end{align}
Observe that $\kappa$ is a maximization problem with a quadratic objective over the unbounded set $\Lte$. Since a quadratic functional can be made arbitrarily large unless it is non-positive everywhere, it follows that
\begin{equation}\label{pf:a:7}
    \kappa = \begin{cases}
        0 & \text{if}\; \mathcal{F}_0(y) - \displaystyle \sum_{j \in \mathcal{M}} \gamma_j \int_{0}^{\infty} y_j(t)^\top R_j y_j(t) \, dt   \leq 0\\
        \infty & \text{otherwise}
    \end{cases}
\end{equation}
Using \eqref{pf:a:7}, \eqref{pf:a:6} can be reformulated as
\begin{equation}\label{pf:a:8}
\begin{aligned}
\inf_{\bm{\gamma} \geq 0} &\; \sum_{j \in \mathcal{M}} \gamma_j \delta_j \\
\text{s.t.} 
&\; \mathcal{F}_0(y) - \displaystyle \sum_{j \in \mathcal{M}} \gamma_j \int_{0}^{\infty} y_j(t)^\top R_j y_j(t) \, dt \leq 0,
\end{aligned}
\end{equation}
Define the supply rate
\begin{equation}\label{eq:supply}
s(u, t) \triangleq \displaystyle \sum_{j \in \mathcal{M}} \gamma_j y_j(t)^\top R_j y_j(t) - \frac{1}{\lvert \mathcal{P} \rvert}
    \sum_{i \in \mathcal{P}}  y_i(t)^\top Q_i y_i(t).
\end{equation}
By linearity, \eqref{pf:a:8} can be rewritten as 
\begin{equation}\label{pf:a:9}
\inf_{\bm{\gamma} \geq 0} \Big\{ \sum_{j \in \mathcal{M}} \gamma_j \delta_j \; \big| \; \int_0^{\infty} s(u(t)) dt \geq 0,\; \forall u \in \Lte \Big\}.
\end{equation}
By dissipative system theory \cite[Theorem 8.4.5]{trentelman1991dissipation}, the constraint $\displaystyle\int_0^{\infty} s(u(t)) dt \geq 0$ is equivalent to the existence of a quadratic storage function satisfying the matrix inequality \eqref{eq:SDP:lower}. This completes the proof of $(a)$. 

Using dissipation inequalities \cite{willems1974existence}, a necessary condition for the constraint in \eqref{pf:a:9} to hold is that
\begin{equation}\label{pf:a:10}
G_p(s)^H G_p(s) - G_m(s)^H \Lambda(\bm{\gamma}) G_m(s) \leq 0,
\end{equation}
holds $\forall s \in \mathbb{C}, \Re(s) \geq 0$ where 
$G_p(s)$ ($G_m(s)$) is the transfer function corresponding to $\Sigma_p$ ($\Sigma_m$), and $ \displaystyle \Lambda(\bm{\gamma}) = \operatorname{diag}_{j \in \Mc} \left( I_{p_j}{\gamma_j} \right)$. Equivalently, the necessary condition \eqref{pf:a:10} can be reformulated as
\begin{equation}\label{pf:a:11}
x^H \left(G_p(s)^H G_p(s) - G_m(s)^H \Lambda(\bm{\gamma}) G_m(s)\right)x \leq 0,
\end{equation}
holds $\forall s : \Re(s) \geq 0, \; \forall x \in \mathbb{C}^m$. To prove part (b), we show that when there exists an unstable finite zero for $\Sigma_m$ that is not a zero of $\Sigma_p$, the condition \eqref{pf:a:10} cannot be satisfied for any finite $\bm{\gamma}$, implying the optimization is unbounded. By the definition of finite zeros, the assumption that $\Sigma_m$ has a finite zero at $s_0$ that is not a zero of $\Sigma_p$ means there exists $s_0 \in \mathbb{C}$ with $\Re{(s_0)} >0$ and a nonzero vector $x_0 \in \mathbb{C}^m$ such that $G_m(s_0)x_0 =0$ and $G_p(s_0)x_0 \neq 0$. Substituting $s=s_0$ and $x=x_0$ into \eqref{pf:a:11}, the left-hand side becomes
\begin{equation}\label{pf:a:12}
 \underbrace{x_0^H G_p(s_0)^H G_p(s_0)x_0}_{> 0} - \underbrace{x_0^HG_m(s_0)^H \Lambda(\bm{\gamma}) G_m(s_0)x_0}_{=0} > 0
\end{equation}
which contradicts \eqref{pf:a:10}. Therefore, no finite $\bm{\gamma}$ can satisfy the constraint, and the optimization problem \eqref{eq:SDP:lower} is unbounded. This completes the proof. $\hfill \blacksquare$
\end{pf}
We next depict the results presented in Theorem~\ref{thm:LTI:lower} with a numerical example.
\begin{example}\label{exmp:upper}
Consider the LTI system \eqref{eq:LTI} where 
\begin{equation}
    A = \begin{bmatrix}
        0 & 1 \\
        0 & \varepsilon 
    \end{bmatrix},
    B=\begin{bmatrix}
        0\\ 1
    \end{bmatrix},
    C_1= \begin{bmatrix}
        1\\5
    \end{bmatrix}^\top, C_2= e_2^\top, C_3= e_1^\top
\end{equation}
with $\delta_2 = \delta_3 =1$, $R_2 = R_3 = 1$, $Q_1=1$, and $e_j$ denoting the $j$-th standard basis vector in $\mathbb{R}^2$. Let $\varepsilon=-1$ which makes the system controllable and asymptotically stable. Solving the SDP \eqref{eq:SDP:lower} yields an optimal value of $36$. $\hfill \triangleleft$
\end{example}
Next, we provide a SDP to determine the upper bound of \eqref{eq:main:LTI} when $\mathcal{U} = \Lt^0$ and $N >1$.
\begin{theorem}\label{thm:LTI:L2}
Suppose that $\Uc = \Lt^{0}$. Assume that $D_i = 0,\;\forall i \in \{1,\dots,N\}$. Then the following statements hold: 
\begin{enumerate}[label=(\alph*)]
    \item The value of the optimization problem \eqref{eq:main:LTI} is upper bounded by the value of the SDP
\begin{equation}\label{eq:SDP:L2}
\begin{aligned}
    \min_{\bm{\gamma},P} & \;  \sum_{j \in \mathcal{M}} \gamma_j \delta_j \\
    \text{s.t.}&\; \begin{bmatrix}
        A^\top P + PA + \Gamma & PB \\
        B^\top P & 0
    \end{bmatrix} \preceq 0, \;\bm{\gamma} \geq 0,\\
    & \; \Gamma = \frac{1}{\vert \mathcal{P} \vert}\sum_{i \in \Pc} C_i^\top Q_i C_i - \sum_{j \in \Mc} \gamma_j C_j^\top R_j C_j  
\end{aligned}
\end{equation}
where $P = P^\top$ and $\bm{\gamma} \in \mathbb{R}^{\vert \mathcal{M}\vert}$ denotes the vertically stacked vector of $\gamma_j$.
\item The value of the optimization problem \eqref{eq:SDP:L2} is unbounded if $\Sigma_m$ has zeros on the imaginary axis that are not zeros of $\Sigma_p$.
\end{enumerate}
\end{theorem}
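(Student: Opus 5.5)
The argument follows the proof of Theorem~\ref{thm:LTI:lower}; only the input space changes, and with it the storage-function argument. For part (a), I will use the epigraph reformulation and restrict the inequality as in \eqref{pf:a:2}, now requiring it for all $u \in \Lt^0$. As before, this gives an upper bound of the form
\begin{equation*}
\inf_{\bm{\gamma}\ge 0}\Big\{\sum_{j\in\Mc}\gamma_j\delta_j \;\Big|\; \int_0^\infty s(u,t)\,dt \ge 0,\ \forall u\in\Lt^0\Big\},
\end{equation*}
where $s$ is the supply rate \eqref{eq:supply} with $D_i=0$. The quadratic-scaling argument \eqref{pf:a:7} still applies, because $\Lt^0$ is a cone: the constraint set is invariant under $u \mapsto \alpha u$. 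Hence $\kappa \in \{0,\infty\}$, exactly as in the $\Lte$ case.

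The main step is to show that the LMI in \eqref{eq:SDP:L2} is sufficient for the integral constraint. Here $P$ is only symmetric and not sign-definite, so this is cyclo-dissipativity rather than dissipativity. Let $V(x)=x^\top P x$. The LMI gives, along trajectories,
\begin{equation*}
\tfrac{d}{dt}V(x(t)) \le s(u,t) \qquad \text{for all } t \ge 0 .
\end{equation*}
Integrating over $[0,T]$ and using $x(0)=0$ yields $\int_0^T s\,dt \ge x(T)^\top P x(T)$. Now take $u\in\Lt^0$. Assumption~\ref{ass:stable} says $A$ is Hurwitz, and $u\in\Lt$ with $u(t)\to 0$; together these give $x\in\Lt$ and $x(T)\to 0$. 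Therefore $x(T)^\top P x(T)\to 0$ regardless of the sign of $P$, and letting $T\to\infty$ gives $\int_0^\infty s\,dt\ge 0$. The integral itself is finite because $x \in \Lt$. This step is exactly why $P\succeq 0$ can be dropped compared with \eqref{eq:SDP:lower}, and I expect it to be the delicate point. In particular I must check that $x(T)\to 0$ holds for every $u\in\Lt^0$. It does: $x$ is the convolution of the exponentially decaying kernel $e^{At}B$ with an input that tends to zero, so $x$ tends to zero.

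For part (b), I will follow part (b) of Theorem~\ref{thm:LTI:lower}. For $u\in\Lt$, Parseval's identity turns the constraint into
\begin{equation*}
\frac{1}{2\pi}\int_{-\infty}^{\infty} \hat u(j\omega)^H\big(G_p^H G_p - G_m^H\Lambda(\bm{\gamma})G_m\big)(j\omega)\,\hat u(j\omega)\,d\omega \le 0 .
\end{equation*}
A necessary condition for this to hold for all admissible $u$ is the frequency-domain inequality \eqref{pf:a:10} on the imaginary axis, $s=j\omega$. To obtain it, I would take inputs whose spectra concentrate near a given $j\omega_0$, such as windowed sinusoids $x_0 e^{j\omega_0 t}$ with slowly decaying envelopes; these lie in $\Lt^0$. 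Now suppose $j\omega_0$ is a zero of $\Sigma_m$ that is not a zero of $\Sigma_p$. Then there is $x_0$ with $G_m(j\omega_0)x_0=0$ and $G_p(j\omega_0)x_0\neq 0$. The quadratic form at $(j\omega_0,x_0)$ is strictly positive for every $\bm{\gamma}$, and by continuity it stays positive in a neighbourhood of $\omega_0$. So no finite $\bm{\gamma}$ is feasible, the SDP \eqref{eq:SDP:L2} is infeasible, and its value is $+\infty$.

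The restriction to imaginary-axis zeros matches the weaker $\Lt^0$ setting. Inputs in $\Lt^0$ cannot exploit the exponentially growing directions associated with zeros in the open right half-plane, which is what the $\Lte$ argument of Theorem~\ref{thm:LTI:lower}(b) relies on.
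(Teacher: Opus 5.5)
Your proof is correct, but it takes a somewhat different route from the paper in both parts.

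\textbf{Part (a).} The paper first asserts that the Lagrangian relaxation is lossless, (C1)$\Leftrightarrow$(C2), using the S-procedure theorem of Petersen et al. It then appeals to Trentelman--Willems cyclo-dissipativity to pass to the LMI. You use only weak duality, which is all an upper bound needs. You then check sufficiency of the LMI by hand: $x(T)\to 0$ (Hurwitz $A$, $u\in\Lt$) kills the terminal term $x(T)^\top P x(T)$ whatever the sign of $P$. Your version is self-contained and shows exactly why $P\succeq 0$ can be dropped. The paper's extra equivalence additionally asserts that the dual equals the primal, which the upper bound does not need.

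\textbf{Part (b).} The paper obtains the frequency-domain inequality directly from the LMI, citing Trentelman--Willems Theorem 8.4.3. Evaluating the LMI's quadratic form at $(\xi,v)$ with $\xi=(j\omega I-A)^{-1}Bv$, and using $A\xi+Bv=j\omega\xi$, gives
$\xi^H\Gamma\xi=v^H\big(G_p^HG_p-G_m^H\Lambda(\bm{\gamma})G_m\big)(j\omega)\,v\le 0$.
This needs no test signals at all. You instead derive the same inequality from the time-domain constraint over $\Lt^0$, via Parseval and narrowband inputs. You must then invoke part (a) (LMI $\Rightarrow$ integral constraint over $\Lt^0$) to conclude that the SDP is infeasible. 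That link is only implicit in your write-up; state it explicitly.

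Your route buys a stronger conclusion: no $\bm{\gamma}$ satisfies even the dual integral constraint over $\Lt^0$. So the blow-up is intrinsic to the Lagrangian bound, not an artifact of the LMI or quadratic storage parametrization. The cost is a limiting argument, and there the test inputs must be real. Take, e.g., $e^{-\epsilon t}\,\Re\big(x_0e^{j\omega_0 t}\big)$ with $\epsilon\to 0$, not the complex $x_0e^{j\omega_0 t}$. The spectral mass near $-\omega_0$ then contributes the same value as near $+\omega_0$, by conjugate symmetry of the real-rational transfer functions. If $\omega_0=0$, take $x_0$ real.
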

\vspace{-15pt}
\begin{pf}
Under the assumption $\Uc = \Lt^{0}$, the optimization problem \eqref{eq:main:LTI} can be reformulated as
\begin{equation}\label{pf:s:5}
\inf_{\beta \geq 0} \left\{  \beta \; \big| \; \mathcal{F}_0(y) \leq \beta,\; \forall u \in \mathcal{A}\right\}
\end{equation}
\begin{equation}
\text{where} \;\mathcal{F}_0(y) \triangleq \frac{1}{\lvert \mathcal{P} \rvert}
    \sum_{i \in \mathcal{P}} \int_{0}^{\infty} y_i(t)^\top Q_i y_i(t) \, dt,\\
\end{equation}
\begin{equation}
    \mathcal{A} = \Big\{ u\in \Lt^{0} \,\big|\, \int_{0}^{\infty} y_j(t)^\top R_j y_j(t) \, dt \leq \delta_j, \; \forall j \in \mathcal{M} \Big\}.
\end{equation}
At optimality, $\mathcal{F}_0(y) \leq \beta$ holds for all admissible inputs $u \in \mathcal{A}$. Next, we claim that the following two conditions are equivalent:
\begin{enumerate}[label=(C\arabic*)]
    \item $\mathcal{F}_0(y) \leq \beta$ holds for all $u \in \mathcal{A}$.
    \item There exists $\bm{\gamma} \geq 0$ such that \eqref{pf:s:6} holds $\forall u \in \Lt^{0}.$
\end{enumerate}
    \begin{equation}\label{pf:s:6}
    \mathcal{F}_0(y) + \sum_{j \in \Mc} \gamma_j \left( \delta_j - \int_{0}^{\infty} y_j(t)^\top R_j y_j(t) \, dt\right) \leq \beta
    \end{equation}
In other words, we claim that the constraint in \eqref{pf:s:5} can be equivalently replaced with its dual constraint (C2). Next, we prove this equivalence. We have shown in Theorem~\ref{thm:LTI:lower} that a sufficient condition for (C1) is (C2). We next show that (C1) $\Rightarrow$ (C2) using \cite[Theorem 4.3.2]{petersen2012robust}.

Let us consider the system \eqref{eq:LTI} with the stacked output vector $y \in \mathbb{R}^{\sum_{i \in \{1,\dots, N\}} p_i}$. Without loss of generality, we assume that the output vector is stacked as 
\begin{equation}
    y(t) = \begin{bmatrix}
        \operatorname{col}_{i \in \mathcal{P}} \left( \sqrt{Q_i} y_i \right)^\top & 
        \operatorname{col}_{j \in \mathcal{M}} \left(\sqrt{R_j} y_j \right)^\top
    \end{bmatrix}^\top.
\end{equation}
In other words, we assume that the first $\vert \Pc\vert$ block outputs affect the objective function and the remaining outputs affect the constraints. Let us now define
\begin{equation}\label{pf:s:G2}
\begin{aligned}
\mathcal{G}_0(y(\cdot)) &= \int_0^\infty y(t)^\top M_0 y(t)\,dt - \beta, \\
\mathcal{G}_j(y(\cdot)) &= \int_0^\infty y(t)^\top M_j y(t)\,dt + \delta_j, \quad \forall j \in \Mc, 
\end{aligned}
\end{equation}
where $M_0 = \operatorname{diag}_{i \in \mathcal{P}} \left( \frac{1}{\vert \Pc \vert} I_{p_i} \right)$ and $M_j = -I_{p_j}, \forall j \in \Mc$. Now suppose that (C1) holds. That is, 
$\mathcal{F}_0(y) - \beta = \mathcal{G}_0(y(\cdot)) \leq 0$ for all $u \in \Lt^{0}$ that satisfies $
\mathcal{G}_j(y(\cdot)) \geq 0$ for all $j \in \Mc$. Thus, for the functionals defined in \eqref{pf:s:G2}, condition (i) in \cite[Theorem 4.3.2]{petersen2012robust} holds. 

Assume that $u = 0 \in \Lt^{0}$. Then, since $x(0)=0$, the outputs are identically zero. It follows that $\mathcal{G}_j(y(\cdot)) = \delta_j  > 0$ for all $j \in \Mc$. Thus, for the functionals defined in \eqref{pf:s:G2}, condition (ii) in \cite[Theorem 4.3.2]{petersen2012robust} holds. Therefore, by \cite[Theorem 4.3.2]{petersen2012robust}, it follows that \eqref{pf:s:6} holds. 
%
%

We have now shown that (C1) and (C2) are equivalent, allowing us to replace the constraint in \eqref{pf:s:5} with the dual constraint. To complete the proof, we must show that constraint \eqref{pf:s:6} can be reformulated as the LMI in \eqref{eq:SDP:L2}. The key difference from Theorem~\ref{thm:LTI:lower} is that here $\Uc = \Lt^{0}$, requiring us to use the cyclo-dissipativity formulation from \cite{trentelman1991dissipation}. The details are omitted as they closely parallel the proof of Theorem~\ref{thm:LTI:lower}

We now explain why the value of \eqref{eq:SDP:L2} provides an upper bound for the dual problem. The cyclo-dissipativity result in \cite{trentelman1991dissipation} establishes that when $\Uc \subseteq \Lte$ and $\lim_{t \to \infty} x(t) = 0$, the dual constraint in \eqref{pf:s:6} is equivalent to the LMI constraint in \eqref{eq:SDP:L2}. In our setting, however, we restrict to $\Uc = \Lt^{0} \subset \Lte$. The LMI constraint in \eqref{eq:SDP:L2} is equivalent for requiring the dual constraint \eqref{pf:s:6} to hold for all $u \in \Lte$, whereas the original dual constraint \eqref{pf:s:6} only needs to hold for $u \in \Lt^{0}$. Since the LMI imposes a stronger requirement (validity over a larger input class), any feasible solution to \eqref{eq:SDP:L2} is also feasible for the dual problem with $\Uc = \Lt^{0}$. Therefore, the optimal value \eqref{eq:SDP:L2} upper bounds the optimal value of the dual problem, which concludes the proof of (a).

Using dissipation inequalities \cite[Theorem 8.4.3]{trentelman1991dissipation}, a necessary and sufficient condition for the LMI constraint in \eqref{eq:SDP:L2} to hold is that 
\begin{equation}\label{pf:s:10}
G_p(i\omega)^H G_p(i\omega) - G_m(i\omega)^H \Lambda(\bm{\gamma}) G_m(i\omega) \preceq 0
\end{equation}
holds $\forall \; \omega \in \mathbb{R}$ and $G_p(s)$ ($G_m(s)$) is the transfer function corresponding to $\Sigma_p$ ($\Sigma_m$), and $\Lambda(\bm{\gamma}) = \operatorname{diag}_{j \in \Mc} \left( I_{p_j}{\gamma_j} \right)$. Then, \eqref{pf:s:10} can be reformulated as
\begin{equation}\label{pf:ee:10}
x^H \left(
G_p(i\omega)^H G_p(i\omega) - G_m(i\omega)^H \Lambda(\bm{\gamma}) G_m(i\omega)
\right)x \leq 0
\end{equation}
holds $\forall \omega \in \mathbb{R}, x \in \mathbb{C}^m$. Thus, \eqref{eq:SDP:L2} can be rewritten as
\begin{equation}\label{lem_s2}
    \inf_{\bm{\gamma}}\; \; \Big\{ \sum_{j \in \Mc} \gamma_j \delta_j \big| \;\eqref{pf:ee:10} \;\text{holds} \; \forall \omega \in \mathbb{R}, x \in \mathbb{C}^m  \Big\}\\
\end{equation}
Assume that there exists a bounded $\Lambda(\gamma)$ that solves \eqref{lem_s2}. We also assume that there exists a real number $\omega$ on the imaginary axis which is a zero of the system ${\Sigma}_{m}$ but not a zero of ${\Sigma}_{p}$. By the definition of a zero, there exists $g \neq 0$ such that ${G}_{m}(i \omega)g=0$ and ${G}_{p}( i \omega)g \neq 0$. Under these assumptions, the constraint of \eqref{lem_s2} can be rewritten as $-g^H {G}_{p}(i \omega)^H{G}_{p}(i \omega)g \geq 0$, which cannot hold since ${G}_{p}(i \omega)g \neq 0$. That is, the feasibility set of \eqref{lem_s2} is empty, which contradicts our assumption. This concludes the proof.  $\hfill \blacksquare$
\end{pf}
%
\begin{example}
    Consider the dynamical system in Example~\ref{exmp:upper}. We let $\varepsilon=-1$ which makes the system controllable and asymptotically stable. Solving the SDP \eqref{eq:SDP:L2} yields an optimal value of $26$. $\hfill \triangleleft$
\end{example}
\section{Positive linear time-invariant systems}\label{sec:positive}
The results presented up to now can be effectively used to determine the value of the optimization problem \eqref{eq:main:sup} for small-scale systems. However SDPs 
scale badly and inhibit its use for large-scale systems 
\cite[Chapter 11.8.3]{boyd2004convex}.
To this end, 
we adopt the positive system framework which has proven to be practically relevant \cite{farina2011positive} and scalable \cite{rantzer2015scalable}. We begin this section by presenting the definition of a positive system. 
\begin{definition}[Positive system]\label{defn:positive}
    A CT dynamical system $\Sigma = (A,B,C,D)$ is internally positive if $A \in \mathbb{M}^n$, $B \geq 0$, $C \geq 0$, and $D \geq 0$. $\hfill \triangleleft$
\end{definition}


Thus in this section, we restrict our attention to dynamical systems of the form \eqref{P_main} satisfying positivity, as defined in Definition~\ref{defn:positive}. In other words, for LTI positive systems considered in this paper, the inputs, outputs and the states remain positive at all times. 
For positive systems, we present the results for both (a) quadratic costs in Section~\ref{sec:pos:quadratic} and (b) linear costs in Section~\ref{sec:pos:linear}.
\subsection{Quadratic cost and constraints}\label{sec:pos:quadratic}
In this subsection, we are then interested in solving the optimization problem of the form
\begin{equation}\label{opt:positive:L2:primal}
\begin{aligned}
    \sup_{u \in \Lt^{0+}} \;\; 
    & \;\frac{1}{\lvert \mathcal{P} \rvert}
    \sum_{i \in \mathcal{P}} \left[ \int_{0}^{\infty} y_i(t)^\top Q_i y_i(t) \, dt \right] \\
    \text{s.t.} \;\;
    & \; \int_{0}^{\infty} y_j(t)^\top R_j y_j(t) \, dt \le \delta_j
    \quad \forall j \in \mathcal{M},\\
    &\; \text{Positive system dynamics \eqref{eq:LTI}}.
\end{aligned}
\end{equation}
For positive systems, the inputs by adversaries should also remain positive to remain stealthy. If the input becomes negative, and the closed loop system is at equilibrium $x^* =0$, the output might become negative which can trivially be detected as an anomaly by the operator. Thus, in this subsection, we restrict our attention to inputs in the signal space $u \in \Lt^{0+}$.
\begin{theorem}\label{thm:SDP:scalable}
Consider a positive LTI system \eqref{eq:LTI} satisfying Definition~\ref{defn:positive}. Then the following statements hold:
\begin{enumerate}[label=(\alph*)]
    \item The value of the optimization problem \eqref{opt:positive:L2:primal} is upper bounded by the value of the SDP\vspace{-10pt}
\begin{adjustwidth}{-\leftmargin}{0pt}
    \begin{equation}\label{eq:SDP:scalable}
    \begin{aligned}
    \min_{\bm{\gamma}, P} & \;  \sum_{j \in \mathcal{M}} \gamma_j \delta_j \\
    \text{s.t.}&\; \begin{bmatrix}
        A^\top P + PA  & PB \\
        B^\top P & 0
    \end{bmatrix}+ \Gamma \preceq 0, \;\bm{\gamma} \geq 0,\\
    &\; P \succeq 0, \Gamma_{ij} \geq 0 \;\; \forall\, i \neq j, \Gamma_{ii} \geq 0 \;\; \forall\, i \leq n.
\end{aligned}
\end{equation}
  \end{adjustwidth}
where $P$ is a diagonal matrix, $\geq$ denotes an element-wise non-negativity constraint, and 
\begin{equation}
\scalebox{0.9}{$
\Gamma = \frac{1}{\vert \mathcal{P} \vert} \underset{i \in \Pc}{\sum} \begin{bmatrix}
        C_i^\top\\
        D_i^\top
    \end{bmatrix}Q_i \begin{bmatrix}
        C_i & D_i
    \end{bmatrix} -  \underset{j \in \Mc}{\sum} \gamma_j\begin{bmatrix}
        C_j^\top\\
        D_j^\top
    \end{bmatrix}R_j \begin{bmatrix}
        C_j & D_j
    \end{bmatrix}$}
\end{equation}
\item The value of the optimization problem \eqref{eq:SDP:scalable} is unbounded if $\Sigma_m$ has zeros on the imaginary axis that are not zeros of $\Sigma_p$.
\end{enumerate}
\end{theorem}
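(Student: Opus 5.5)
The plan is to reuse the dualization pipeline of Theorem~\ref{thm:LTI:lower} and Theorem~\ref{thm:LTI:L2}, and then to show that, restricted to nonnegative inputs, the diagonal-$P$ LMI with sign constraints on $\Gamma$ is a sufficient certificate for the dual constraint.

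\textbf{Dualization.} First, write \eqref{opt:positive:L2:primal} in epigraph form, exactly as in \eqref{pf:a:1} but with $\mathcal{A}$ now consisting of inputs $u \in \Lt^{0+}$. The S-procedure argument leading to \eqref{pf:a:2}--\eqref{pf:a:8} uses only $\bm{\gamma} \ge 0$ and the constraint inequalities, not the input class. It therefore shows that the value is upper bounded by
\[
\inf_{\bm{\gamma}\ge 0}\Big\{\sum_{j\in\Mc}\gamma_j\delta_j \;\Big|\; \int_0^\infty s(u,t)\,dt \ge 0,\ \forall u \in \Lt^{0+}\Big\},
\]
with the supply rate $s$ from \eqref{eq:supply}. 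It then remains to show that any feasible $(\bm{\gamma},P)$ of \eqref{eq:SDP:scalable} makes this integral constraint hold.

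\textbf{Dissipation with a diagonal storage.} Take $V(x) = x^\top P x$ with $P \succeq 0$ diagonal. By positivity (Definition~\ref{defn:positive}), $u \ge 0$ and $x(0)=0$ imply $x(t) \ge 0$ for all $t$. For $z = [x^\top\ u^\top]^\top$ the LMI gives
\[
\frac{d}{dt}V(x(t)) = z^\top\begin{bmatrix}A^\top P+PA & PB\\ B^\top P & 0\end{bmatrix}z \le -z^\top \Gamma z = s(u,t).
\]
Integrating over $[0,T]$ and using $V(x(0))=0$ yields $\int_0^T s\,dt \ge V(x(T)) \ge 0$.

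\textbf{Passing to infinite horizon.} Since $u \in \Lt^{0+}$ and $A$ is Hurwitz, $x(T)\to 0$ and the integrals converge as $T\to\infty$. This gives $\int_0^\infty s\,dt \ge 0$, which proves (a).

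\textbf{Role of the sign constraints.} The sign constraints on $\Gamma$ are not needed for the bound itself; they only restrict the feasible set. I would comment that, because $z \ge 0$ along trajectories, these constraints are what make the diagonal parametrization with $O(n)$ parameters natural: the copositivity-type conditions could replace semidefiniteness. For the upper-bound claim, however, the argument above already suffices. Any extra constraint only shrinks the feasible set, so the bound remains valid.

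\textbf{Part (b).} A feasible point of \eqref{eq:SDP:scalable} is in particular feasible for the LMI in \eqref{eq:SDP:L2} (with $D_i$ terms included), so the KYP/frequency-domain necessity condition \eqref{pf:s:10} must hold for all $\omega$. Suppose $\Sigma_m$ has a zero at $i\omega_0$ with direction $g$ such that $G_m(i\omega_0)g = 0$ and $G_p(i\omega_0)g \neq 0$. Evaluating \eqref{pf:ee:10} at $x = g$ gives
\[
g^H G_p(i\omega_0)^H G_p(i\omega_0)\, g > 0,
\]
which contradicts \eqref{pf:ee:10}. Hence no finite $\bm{\gamma}$ is feasible and the value is unbounded, exactly as at the end of the proof of Theorem~\ref{thm:LTI:L2}.

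\textbf{Main obstacle.} The step I expect to be most delicate is justifying the frequency-domain necessity in (b) when $P$ is only diagonal. The route is to note that the KYP necessity direction holds for any symmetric $P$: from the LMI, multiply on both sides by $\big[((i\omega I - A)^{-1}B g)^H\ \ g^H\big]$ and its conjugate transpose. The $P$ terms then cancel, leaving the frequency-domain inequality, so this direction survives the restriction.
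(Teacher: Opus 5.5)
Your proof is correct, but for part (a) it takes a different route from the paper.

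The paper works in the frequency domain. It uses the FDI $\Xi(\omega)^H\,\Gamma\,\Xi(\omega)\preceq 0$ as a sufficient condition for the dual constraint. It then invokes the KYP lemma for positive systems \cite[Theorem~1]{rantzer2015kalman}. For $A$ Metzler, $B\ge 0$, and the sign conditions on $\Gamma$ (called A1 in the paper), this lemma makes the FDI \emph{equivalent} to the LMI with a diagonal $P\succeq 0$.

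You instead certify dissipativity directly in the time domain with the storage $x^\top P x$. This is more elementary. As you note, it uses neither diagonality, nor the sign constraints, nor positivity of the system or the input. In effect it shows that part (a) is a corollary of Theorem~\ref{thm:LTI:lower}(a), since $\Lt^{0+}\subset\Lte$ and the feasible set of \eqref{eq:SDP:scalable} is contained in that of \eqref{eq:SDP:lower}.

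What the paper's route buys is a statement about conservatism. Under A1, a diagonal $P$ exists whenever the FDI holds, hence whenever any symmetric $P$ satisfies the LMI. So the $O(n)$-parameter restriction loses nothing once A1 is imposed. That equivalence, not the copositivity heuristic you mention, is why the sign constraints appear in \eqref{eq:SDP:scalable}.

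Your part (b) coincides with the paper's, which defers to Theorem~\ref{thm:LTI:L2}(b). Your explicit check that the $P$-terms vanish after pre- and post-multiplication by $\Xi(\omega)^H$ and $\Xi(\omega)$, for any symmetric $P$, supplies the necessity direction. The paper obtains that direction only implicitly, from Rantzer's equivalence.
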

\vspace{-20pt}
\begin{pf}
The optimization problem \eqref{opt:positive:L2:primal} can be formulated as a convex dual
\begin{equation}\label{eq:pos:1}
\inf_{\bm{\gamma} \geq 0} \Big\{ \sum_{j \in \mathcal{M}} \gamma_j \delta_j \; \big| \; \int_0^{\infty} s(u(t)) \, dt \geq 0,\; \forall u \in \mathcal{L}_2^{0+} \Big\}
\end{equation}
The dual problem \eqref{eq:pos:1} provides an upper bound for \eqref{opt:positive:L2:primal}, which can be established by arguments analogous to those in Theorem~\ref{thm:LTI:lower}\footnote{Here, we are unable to establish strong duality as it requires the absence of direct feedthrough terms.}. Using dissipation inequalities \cite[Theorem 8.4.3]{trentelman1991dissipation}, a sufficient condition for the constraint in \eqref{eq:pos:1} to hold is that 
\begin{equation}\label{eq:pos:2}
G_p(i\omega)^H G_p(i\omega) - G_m(i\omega)^H \Lambda(\bm{\gamma}) G_m(i\omega) \preceq 0,
\end{equation}
holds $\forall \omega \in \mathbb{R}$, where $G_p(s)$ ($G_m(s)$) is the transfer function corresponding to $\Sigma_p$ ($\Sigma_m$), and $\Lambda(\bm{\gamma}) = \operatorname{diag}_{j \in \Mc} \left( I_{p_j}{\gamma_j} \right)$. Using the definitions of $\Sigma_p$ and $\Sigma_m$ in \eqref{eq:ypym}, we get 
\begin{equation}\label{eq:pos:3}
\begin{aligned}
    G_p(i\omega) &= \frac{1}{\sqrt{\vert \mathcal{P} \vert}} \operatorname{col}_{i \in \mathcal{P}} \left( \sqrt{Q_i} \begin{bmatrix}
        C_i & D_i
    \end{bmatrix}\right) \Xi(\omega),\\
    G_m(i\omega) &= \operatorname{col}_{j \in \mathcal{M}} \left( \sqrt{R_j} \begin{bmatrix}
        C_j & D_j
    \end{bmatrix}\right) \Xi(\omega),
\end{aligned}
\end{equation}
where $\Xi(\omega) = \begin{bmatrix}
        (i \omega I - A)^{-1}B\\
        I_{m}
    \end{bmatrix}$. Using the definitions from \eqref{eq:pos:3}, \eqref{eq:pos:2} can be equivalently reformulated as $\Xi(\omega)^H\;
 \Gamma \; \Xi(\omega) \preceq 0$, holds $ \forall \omega \in [0,\infty)$, where $\Gamma$ is defined in the theorem statement. We now impose the condition that $\Gamma_{ij} \geq 0 \; \forall\, i \neq j$, $\Gamma_{ii} \geq 0 \;\; \forall\, i \leq n$ which we refer to as A1. 

From \cite[Theorem~1]{rantzer2015kalman}, when A1 is satisfied, condition $\Xi(\omega)^H\;\Gamma \; \Xi(\omega) \preceq 0$ is equivalent to the SDP constraint in \eqref{eq:SDP:scalable}. Since the constraint $\Gamma_{ij} \geq 0 \;\; \forall\, i \neq j, \Gamma_{ii} \geq 0 \;\; \forall\, i \leq n$ is explicitly included in the theorem statement, A1 holds by construction, which concludes the proof of part~(a). The proof of part~(b) follows analogously to the sufficiency proof of part~(b) in Theorem~\ref{thm:LTI:L2} and is therefore omitted. $\hfill \blacksquare$
\end{pf}
Note that the SDPs presented in the previous sections (for instance, Theorem~\ref{thm:LTI:L2}) involve a symmetric full matrix variable $P$. Consequently, the number of decision variables is $n(n+1)/2$, resulting in quadratic growth with respect to the number of states. In this section, we propose an alternative upper bound based on a diagonal matrix $P$ with only $n$ parameters. This diagonal parameterization reduces the computational complexity to linear growth in $n$, enabling scalable analysis for large-scale systems. The trade-off of this scalable approach is that we restrict our analysis to the influence of positive attack signals on positive systems. As discussed previously, this restriction is realistic for many physical processes with nonnegative states. We next illustrate the results of Theorem~\ref{thm:SDP:scalable} through a numerical example.
\begin{example}
Consider a LTI positive system where
\begin{equation}
    A = \begin{bmatrix}
        -2 &   0 &  3\\
      0 &   -3 &   1\\
      0 &    0 &  -5
    \end{bmatrix}, B = \begin{bmatrix}
        1\\1\\1
    \end{bmatrix}, C_2 = \begin{bmatrix}
        I_2 & 0\\
        0 & 0
    \end{bmatrix}, D_2 = \begin{bmatrix}
        0\\0\\1
    \end{bmatrix}
\end{equation}
$C_1 = 10I_3$, $D_1 =0_{3 \times 1}$, $\delta_2=1$, $Q_1=I_3$, and $R_2=I_3$. We solve the optimization problem \eqref{eq:SDP:scalable} and obtain the value of $46.67$. In other words, the adversary, by injecting only positive attack signals, can at-most, cause a performance loss of $46.67$ units. $\hfill \triangleleft$
\end{example}
\subsection{Linear cost and constraints}\label{sec:pos:linear}
In this paper, we have thus far considered optimization problems of the form \eqref{eq:main:sup} with quadratic costs of the form $\ell(y,S) = y^\top S y$, $y \in \mathbb{R}^p$, $S \in \mathbb{R}^{p \times p}$. In this subsection, we consider optimization problems of the form \eqref{eq:main:sup} with linear costs of the form $\ell(y,S) = S^\top |y|$, $y \in \mathbb{R}^p$, $S \in \mathbb{R}^{p}$, $S \geq 0,$
where $\geq 0$ denotes element-wise inequality and $|\cdot|: \mathbb{R}^p \to \mathbb{R}_+^p$ denotes the element-wise absolute value operator. We remark that studying linear costs for positive systems has been a growing area of interest \cite{chen2013,gurpegui2023minimax,ohlin2024optimal}. However, such costs have not yet been used to study the antagonistic control problem, which we address.

In particular, we consider LTI positive systems with linear costs. The dissipativity result for positive systems established in \cite{haddad2005stability} guarantees the existence of a storage function that allows us to transform the infinite-dimensional optimization problem into a finite-dimensional one. This enables us to formulate an equivalent linear program (LP). Specifically, we aim to solve the optimization problem
%
\begin{equation}\label{opt:positive:L1:primal}
\scalebox{0.9}{$ \displaystyle
\sup_{u \in \mathcal{L}_{1e}^+} \Bigg\{ \int_{0}^{\infty} q^\top |y_{1}(t)| \; dt\;\Bigg\vert\;
\begin{matrix}
\displaystyle \int_{0}^{\infty} r^\top |y_{2}(t)| \; dt \le \delta,\; \\
\text{Positive dynamics \eqref{eq:LTI}}
\end{matrix}
\Bigg\}$}
\end{equation}
where $q \in \mathbb{R}^{p_1}$, $q \geq 0$, $r \in \mathbb{R}^{p_2}$ and $r \geq 0$. Next, we present the main result of this section. 
\begin{theorem}\label{thm:LP:scalable}
Suppose that the value of the optimization problem 
\eqref{opt:positive:L1:primal} is finite. Then it is equal to the value 
of the following LP
\begin{equation}\label{LP:pos}
\min_{\gamma \geq 0, \lambda \in \mathbb{R}^n, \lambda \geq 0} \Bigg\{ \delta \gamma \;\;\Big\vert \;\;\begin{matrix}
    {A}^\top \lambda + C_1^\top q - \gamma C_2^\top r \leq 0\\
 B^\top \lambda + D_1^\top q - \gamma D_2^\top r \leq 0
\end{matrix}
\Bigg\}
\end{equation}
\end{theorem}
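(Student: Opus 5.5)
Since the system is internally positive and $u \in \mathcal{L}_{1e}^+$ with $x(0)=0$, we have $x(t)\ge 0$ and $y_i(t)=C_ix(t)+D_iu(t)\ge 0$ for all $t$. The absolute values therefore drop out, and both objective and constraint in \eqref{opt:positive:L1:primal} are \emph{linear} functionals of $(x,u)$: $\int q^\top(C_1x+D_1u)\,dt$ and $\int r^\top(C_2x+D_2u)\,dt$. The plan is to follow the pipeline of Theorem~\ref{thm:LTI:L2e}: dualize the single constraint with a scalar multiplier $\gamma\ge 0$, and then replace the resulting dissipativity constraint by a linear storage function $V(x)=\lambda^\top x$, $\lambda\ge 0$, in the spirit of \cite{haddad2005stability}. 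This gives an LP in place of an SDP.

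\textbf{Step 1 (duality).} With one constraint and strict feasibility ($u=0$ gives $0<\delta$), Lagrangian duality for this linear (hence convex) problem gives value $=\inf_{\gamma\ge0}\{\gamma\delta+\kappa(\gamma)\}$, where $\kappa(\gamma)=\sup_{u\ge0}\int_0^\infty (q^\top y_1-\gamma r^\top y_2)\,dt$. Since the functional is positively homogeneous in $u$, $\kappa\in\{0,\infty\}$, exactly as in \eqref{pf:a:7}. Finiteness of the primal value, which is assumed, rules out the degenerate case. The dual is then $\min \gamma\delta$ subject to $\int_0^\infty s\,dt\le 0$ for all $u\ge0$, with supply $s=(C_1^\top q-\gamma C_2^\top r)^\top x+(D_1^\top q-\gamma D_2^\top r)^\top u$.

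\textbf{Step 2 (sufficiency of the LP).} If $\lambda\ge0$ satisfies the two inequalities in \eqref{LP:pos}, set $V=\lambda^\top x$. Then $\dot V+s=(A^\top\lambda+C_1^\top q-\gamma C_2^\top r)^\top x+(B^\top\lambda+D_1^\top q-\gamma D_2^\top r)^\top u\le 0$, because $x,u\ge0$. Integrating, and using $V(0)=0$ and $V\ge0$, gives $\int_0^T s\,dt\le -V(x(T))\le 0$. Hence every LP-feasible $\gamma$ is dual-feasible, and the LP value is at least the primal value.

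\textbf{Step 3 (necessity, the main obstacle).} Given a dual-feasible $\gamma$, we must construct $\lambda\ge0$. First take impulsive or concentrated inputs $u=e_k$ on a short interval and scale them. With $\gamma$ feasible, the positive linear system with nonnegative input and scalar "output" $s$ has nonpositive total gain. The candidate is the available storage $\lambda^\top=-\bar c^\top A^{-1}\ge 0$, which is the steady-state cost vector, where $\bar c=C_1^\top q-\gamma C_2^\top r$. This uses that $A$ is Metzler and Hurwitz (Assumption~\ref{ass:stable}), so $-A^{-1}\ge0$. Here a sign issue arises: $\bar c$ may have mixed signs, so nonnegativity of $\lambda$ is not automatic. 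I would instead argue via LP duality in finite dimension. Reduce to constant inputs: the cost of $u\equiv e_k$ over infinite time equals the DC gain $-\bar c^\top A^{-1}B e_k+\bar d_k$. Dual feasibility forces this quantity to be $\le0$, and also forces $\int s\le0$ for trajectories starting from any reachable nonnegative state. Then apply Farkas' lemma to the finite system $\{A^\top\lambda\le-\bar c,\ B^\top\lambda\le-\bar d,\ \lambda\ge0\}$. Infeasibility yields nonnegative $(\xi,\mu)$ with $A\xi+B\mu\ge0$ and $\bar c^\top\xi+\bar d^\top\mu>0$. Such a pair can be interpreted as a (quasi-)steady operating point, and we realize it by an input holding $x$ near $\xi$ using $\mu$ over a long horizon, with the slack handled via positivity and controllability (Assumption~\ref{ass:ctrb}). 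This makes $\int s$ positive and contradicts dual feasibility. Making this trajectory construction rigorous is the delicate step. Once it is done, the LP value equals the dual value, which equals the primal value.
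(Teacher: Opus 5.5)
Your Step 1 is the paper's duality step. Your Step 2 is the easy half of the dissipativity characterization the paper invokes, and it correctly shows that \eqref{LP:pos} upper-bounds the primal value. The gap is Step 3, which carries the entire ``equal to'' claim and is left as a sketch. The paper does not construct anything there either; it imports the converse from \cite[Theorem~6.2]{haddad2005stability}.

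Your sketch cannot be completed, for two reasons. First, a Farkas certificate satisfies only $A\xi+B\mu\ge 0$, typically not $=0$, so $x$ cannot be held near $\xi$ over a long horizon. Indeed, for every $u\in\mathcal{L}_1^+$ the integrals $X=\int_0^\infty x\,dt$ and $U=\int_0^\infty u\,dt$ satisfy $AX+BU=0$. Hence the infinite-horizon constraint of your Step 1 encodes only your DC-gain inequality $\bar d-B^\top A^{-\top}\bar c\le 0$. Second, $\xi$ need not be reachable from $0$ at all with $u\ge 0$: controllability does not give positive reachability.

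Here is a concrete counterexample. Take $A=\mathrm{diag}(-1,-2)$, $B=(1,1)^\top$, $C_1=e_2^\top$, $C_2=e_1^\top$, $D_1=D_2=0$, $q=r=1$; this system is Metzler, Hurwitz and controllable.
\begin{itemize}
\item From $x(0)=0$ with $u\ge 0$ one has $x_2(t)\le x_1(t)$. So $\int_0^T(q^\top y_1-\gamma r^\top y_2)\,dt\le 0$ for every $T$ whenever $\gamma\ge 1$.
\item The primal value is $\delta/2$: the objective is $\tfrac12\int u$ and the constraint is $\int u\le\delta$.
\item In \eqref{LP:pos}, the first block row gives $1-2\lambda_2\le 0$ and the second gives $\lambda_1+\lambda_2\le 0$. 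These are incompatible with $\lambda\ge 0$, so the LP value is $+\infty$.
\item The Farkas certificate is $\xi=(0,\tfrac12)^\top$, $\mu=1$, and this $\xi$ lies outside the reachable cone $\{0\le x_2\le x_1\}$.
\end{itemize}
So LP-feasibility is strictly stronger than dual feasibility, and no trajectory construction can close Step 3. The statement as written, and the equivalence the paper borrows at this point, require additional hypotheses.

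The sign problem you noticed is the real obstruction. If $\lambda$ is left free, then $A^\top\lambda\le-\bar c$ forces $\lambda\ge\lambda_0=-A^{-\top}\bar c$, because $-A^{-1}\ge 0$. With $B\ge 0$, feasibility then reduces exactly to $B^\top\lambda_0+\bar d\le 0$, which is your DC-gain condition. With $\lambda\ge 0$ imposed, your argument delivers only the upper bound from Step 2.
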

\vspace{-20pt}
\begin{pf}
Since the system is positive and $u \in \mathcal{L}_{1e}^{+}$,  the absolute value operators in \eqref{opt:positive:L1:primal} can be dropped, and the objective and constraint are linear functionals of $u$. Furthermore, $\mathcal{L}_{1e}^{+}$ is a convex subset of the vector space $\mathcal{L}_{1e}$, and Slater's condition holds since $u = 0 \in \mathcal{L}_{1e}^{+}$ yields $\int_0^\infty r^\top y_2(t)\, dt = 0 < \delta$. Therefore, by Lagrange duality \cite[Chapter~8.6]{luenberger1997optimization}, and by arguments analogous to \eqref{pf:a:6}-\eqref{pf:a:7}, the optimization problem \eqref{opt:positive:L1:primal} is equivalent to
\begin{equation}\label{z1}
\scalebox{0.9}{$\min_{\gamma \geq 0} \Big\{ \delta \gamma \;\big\vert\;
\displaystyle \int_{0}^{\infty} \big[ q^\top y_1(t) - \gamma\, r^\top y_2(t) \big] 
\, dt \leq 0,\;\forall u \Big\}$}
\end{equation}
Applying the dissipativity results for positive systems from 
\cite[Theorem~6.2]{haddad2005stability} with supply rate $-q^\top y_1 + \gamma r^\top y_2$, the constraint in \eqref{z1} is equivalent to the existence of $\lambda \geq 0$ satisfying the inequalities in \eqref{LP:pos}. $\hfill \blacksquare$
\end{pf}
Theorem~\ref{thm:LP:scalable} provides an LP to compute the value of \eqref{opt:positive:L1:primal} when there is one constraint. Extending the results to multiple constraints is left for future work. 
We next depict the result of Theorem~\ref{thm:LP:scalable} using a numerical example. 
\begin{example}
Consider a LTI positive system where
$A = \begin{bmatrix}
        -0.1 &   0.1\\
      0.1 &   -0.1
    \end{bmatrix}$,
$B = I_2$, 
$C_1 = \begin{bmatrix}1 & 0 \end{bmatrix}$, $C_2 = \begin{bmatrix}0 & 1\end{bmatrix}$,
$D_1=0$, $D_2=0.5$, $r=q=1$ and $\delta=1$. We solve the optimization problem \eqref{LP:pos} and obtain the value of $20$. In other words, the adversary, by injecting only positive attack signals, can at-most, cause a performance loss of $20$ units. $\hfill \triangleleft$
\end{example}
\section{Polynomial systems}\label{sec:poly}
The results presented so far in this article, only hold when the process dynamics are LTI. We relax this assumption in this section and consider a nonlinear polynomial system of the form \eqref{P_main} where
\begin{equation}\label{eq:poly}
\begin{aligned}
    \dot{x}(t) &=  AZ(x(t)) + Bu(t),\; x(0)=0\\
    y_i(t) &= C_iZ(x(t)), \;i\in \{1,2,\dots,N\}
\end{aligned}
\end{equation}
where $Z(x)$ is a polynomial basis function in $x$, and $A$ and $B$ are matrices of appropriate dimension. Thus, in this section, we aim to solve 
\begin{equation}\label{opt:poly:primal}
\begin{aligned}
    \sup_{u \in \Lte} \;\; 
    & \;\frac{1}{\lvert \mathcal{P} \rvert}
    \sum_{i \in \mathcal{P}} \left[ \int_{0}^{\infty} y_i(t)^\top Q_i y_i(t) \, dt \right] \\
    \text{s.t.} \;\;
    & \; \int_{0}^{\infty} y_j(t)^\top R_j y_j(t) \, dt \le \delta_j
    \quad \forall j \in \mathcal{M},\\
    &\; \text{System dynamics \eqref{eq:poly}}
\end{aligned}
\end{equation}
Next we present the first main result of this section.
\begin{theorem}\label{thm:SOS}
The value of the optimization problem \eqref{opt:poly:primal} is upper bounded by the value of the SOS program
\begin{equation}\label{eq:SOS}
\begin{aligned}
     \underset{\gamma, S}{\min} & \; \sum_{j \in \Mc} \gamma_j \delta_j,\; \gamma \geq 0\\
     \text{s.t.} &\;  Z(x)^\top \Gamma Z(x)- \frac{\partial S(x)}{\partial x} (AZ(x) + Bu) \;\;\text{is SOS in} \; (x,u)\\
    &\; \Gamma = \sum_{j \in \Mc} C_j^\top \gamma_j R_j C_j  -
     \frac{1}{\lvert \mathcal{P} \rvert}
    \sum_{i \in \mathcal{P}}  C_i^\top Q_i C_i
\end{aligned}
\end{equation}
where $S(x)$ is a SOS polynomial in $x$.
\end{theorem}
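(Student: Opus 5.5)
The plan mirrors the duality-plus-dissipativity pipeline of Theorem~\ref{thm:LTI:lower}. The only change is that the quadratic storage function certified by an LMI is replaced by a polynomial storage function certified by an SOS condition.

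\emph{Step 1: weak duality.} Write \eqref{opt:poly:primal} in epigraph form, exactly as in \eqref{pf:a:1}. Repeating the argument in \eqref{pf:a:2}--\eqref{pf:a:8} word for word, for any $\bm{\gamma}\ge 0$ the primal value is at most $\sum_{j\in\Mc}\gamma_j\delta_j$ whenever
\begin{equation*}
\int_0^\infty Z(x(t))^\top \Gamma Z(x(t))\,dt \ge 0 \quad \forall u\in\Lte ,
\end{equation*}
with $\Gamma$ as in \eqref{eq:SOS}. This uses only $\gamma_j\ge0$ and the definition of the admissible set, so linearity of the dynamics plays no role. There is one point to handle honestly. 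For nonlinear dynamics, the quadratic-functional dichotomy \eqref{pf:a:7} need not hold, because scaling $u$ does not scale $y$. However, we only need the inequality direction $\kappa\le 0$ whenever the integral constraint holds, and that direction is immediate. So the dualized problem still upper bounds the primal.

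\emph{Step 2: dissipativity certificate.} Suppose $S$ is SOS and satisfies the SOS constraint in \eqref{eq:SOS}. Then pointwise in $(x,u)$,
\begin{equation*}
\frac{\partial S}{\partial x}(AZ(x)+Bu)\le Z(x)^\top\Gamma Z(x).
\end{equation*}
Along any trajectory of \eqref{eq:poly} with $u\in\Lte$, this reads $\dot S(x(t))\le s(u,t)$, where $s$ is the supply rate \eqref{eq:supply} specialized to $y_i=C_iZ(x)$. Integrate over $[0,T]$ and use $x(0)=0$. Since $S$ is SOS, $S\ge0$, and we may take $S(0)=0$ without loss: subtracting the constant $S(0)$ leaves the gradient constraint unchanged. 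This gives
\begin{equation*}
\int_0^T s\,dt\ge S(x(T))-S(0)\ge 0 \quad \text{for every } T.
\end{equation*}
Letting $T\to\infty$ yields the dual constraint of Step 1, whenever the integrals are defined in the extended sense. Hence every feasible $(\bm{\gamma},S)$ of \eqref{eq:SOS} is feasible for the dual. The value of \eqref{eq:SOS} therefore bounds the dual value from above, and hence the primal value.

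\emph{Main obstacle.} I expect the delicate step to be the normalization $S(0)=0$ together with the passage $T\to\infty$. If $S(0)>0$, we only get $S(x(T))-S(0)\ge -S(0)$, and that is insufficient. I would first argue that the SOS constraint involves only $\partial S/\partial x$, so replacing $S$ by $S-S(0)$ keeps feasibility. The remaining gap is that $S-S(0)$ may fail to be nonnegative. To handle this, I would either add the constraint $S(0)=0$, which is a linear constraint on the coefficients and costs nothing, or note that $x=0$ minimizes $S$ whenever $S$ has no constant or linear terms in its Gram representation. For the limit, $\liminf_{T\to\infty}$ suffices, because the objective and constraint integrands are nonnegative, so their integrals are monotone in $T$.
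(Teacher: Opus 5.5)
This is essentially the paper's proof. Weak duality is carried over from Theorem~\ref{thm:LTI:lower}, and then an SOS storage function with $S(0)=0$ is used: its pointwise dissipation inequality, integrated from $x(0)=0$, gives $\int_0^T s\,dt\ge S(x(T))\ge 0$. You are more careful than the paper in two respects: you observe that the dichotomy \eqref{pf:a:7} relies on homogeneity, and you note that only the sufficiency direction of the dissipativity argument is needed for an upper bound.

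The one correction concerns the ``without loss'' normalization in Step~2, which is false. It must be replaced by the explicit constraint $S(0)=0$, which the paper's proof imposes and your last paragraph proposes. The reason is that the slack $S(0)-\min S$ can genuinely be exploited. Take $\dot x_1=x_2$, $\dot x_2=-x_2+u$, $y_1=\tfrac12 S'(x_1)$, $y_2=y_1+x_2$, $Q_1=R_2=1$, $\gamma=1$, and the SOS polynomial $S=(x_1-2)^2(x_1^2+x_1+1)$. The SOS constraint holds with remainder $x_2^2$. Yet an input that steers $x_1$ slowly from $0$ to $2$ gives $\int_0^\infty y_1^2\,dt=\int_0^\infty y_2^2\,dt+4-\int_0^\infty x_2^2\,dt$. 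This exceeds $\gamma\delta_2$ once $\delta_2$ is set to $\int_0^\infty y_2^2\,dt$.
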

\vspace{-15pt}
\begin{pf}
Using the arguments from the proof of Theorem~\ref{thm:LTI:lower}, it follows that the value of \eqref{opt:poly:primal} is upper bounded by the value of its dual counterpart
\begin{equation}\label{eq:nl:1}
\inf_{\bm{\gamma} \geq 0} \Big\{ \sum_{j \in \mathcal{M}} \gamma_j \delta_j \; \big| \; \int_0^{\infty} s(u(t)) dt \geq 0,\; \forall u \in \Lte \Big\},
\end{equation}
\begin{equation}
s(u(t)) \triangleq \displaystyle \sum_{j \in \mathcal{M}} \gamma_j y_j(t)^\top R_j y_j(t) - \frac{1}{\lvert \mathcal{P} \rvert}
    \sum_{i \in \mathcal{P}}  y_i(t)^\top Q_i y_i(t).
\end{equation}
Note that the proof follows directly from the proof of Theorem~\ref{thm:LTI:lower}, as it does not rely on the specific LTI structure of the dynamics. Consequently, the upper bound established for the dual problem in the LTI case extends to the polynomial systems considered here.

By dissipative system theory \cite{willems1972dissipative}, the constraint $\int_0^{\infty} s(u,t) dt \geq 0$ is equivalent to the existence of a storage function $S(x)$ such that $0 \leq s(u) - \left( \frac{\partial S(x)}{\partial x} \times  \dot{x} \right), \forall (x,u)$.
Similar to \cite{papachristodoulou2005tutorial}, let us consider a SOS polynomial storage function $S(x)$, such that $S(0)=0$. Then the condition $0 \leq s(u) - \left( \frac{\partial S(x)}{\partial x} \times  \dot{x} \right)$ is equivalent to the constraint in \eqref{eq:SOS}, which concludes the proof. $\hfill \blacksquare$
\end{pf}
Theorem~\ref{thm:SOS} establishes that the optimal value of the optimization problem is upper bounded by the value of the SOS program in \eqref{eq:SOS}. This upper bound arises from two sources. First, there may exist a duality gap between the primal problem \eqref{opt:poly:primal} and its dual formulation \eqref{eq:nl:1}. Second, we restrict our search to polynomial storage functions of the form $S(x)$, whereas other nonlinear storage functions may potentially yield tighter bounds. We next depict the efficacy of the proposed SOS program through a numerical example. 
\begin{example}
Consider a polynomial system of the form 
\begin{equation}
    \begin{bmatrix}
        \dot{x}_1(t)\\
        \dot{x}_2(t)
        \end{bmatrix} = \begin{bmatrix}
            -1 & 1 & 0.1 & 0\\
            0 & -2 & 0 & 0.05
        \end{bmatrix} Z(x) + \begin{bmatrix}
            1\\
            0.5
        \end{bmatrix} u(t),
\end{equation}
where $Z(x) = \begin{bmatrix}
            x_1(t) &
            x_2(t) &
            x_1(t)x_2(t)&
            x_1(t)^2
        \end{bmatrix}^\top$, $y_1(t) = x_1(t)$, and $y_2(t) = x_2(t)$. We set $N=2$, $Q_1 = 1$, $R_2 = 1$, $\delta_2 = 1$, and compute an upper bound for the optimization problem \eqref{opt:poly:primal} using the method described in Theorem~\ref{thm:SOS}. A degree-8 polynomial storage function yields an upper bound of $0.2507$. Increasing the polynomial degree may potentially yield tighter bounds, albeit at greater computational expense. For example, a degree-26 storage function produces an improved bound of $0.2501$. $\hfill \triangleleft$
\end{example}
\section{Conclusion}
In this paper, we studied the antagonistic control problem, in which a 
constrained input seeks to maximize the average cost of a set of outputs while remaining bounded in terms of other outputs. For LTI systems, we provided an exact SDP when there is a single constraint and SDPs computing upper bounds when there are multiple constraints, together with sufficient conditions, in terms of system zeros and relative degrees, under which the worst-case cost is unbounded. For positive systems, we derived scalable formulations whose complexity grows linearly in the state dimension: a scalable SDP for quadratic costs and an exact LP for linear costs. Finally, we extended the analysis to polynomial systems via an SOS program. Future work includes synthesis of controllers that minimize the worst-case cost.
\bibliographystyle{plain}       
\bibliography{autosam} 

@article{milovsevic2019estimating,
  title={Estimating the impact of cyber-attack strategies for stochastic networked control systems},
  author={Milo{\v{s}}evi{\'c}, Jezdimir and Sandberg, Henrik and Johansson, Karl Henrik},
  journal={IEEE Transactions on Control of Network Systems},
  volume={7},
  number={2},
  pages={747--757},
  year={2019},
  publisher={IEEE}
}

@article{gurpegui2023minimax,
  title={Minimax linear optimal control of positive systems},
  author={Gurpegui, Alba and Tegling, Emma and Rantzer, Anders},
  journal={IEEE Control Systems Letters},
  volume={7},
  pages={3920--3925},
  year={2023},
  publisher={IEEE}
}

@inproceedings{papachristodoulou2005tutorial,
  title={A tutorial on sum of squares techniques for systems analysis},
  author={Papachristodoulou, Antonis and Prajna, Stephen},
  booktitle={Proceedings of the 2005, American Control Conference, 2005.},
  pages={2686--2700},
  year={2005},
  organization={IEEE}
}

@article{willems1972dissipative,
  title={Dissipative dynamical systems part I: General theory},
  author={Willems, Jan C},
  journal={Archive for rational mechanics and analysis},
  volume={45},
  number={5},
  pages={321--351},
  year={1972},
  publisher={Springer}
}

@article{rantzer2015scalable,
  title={Scalable control of positive systems},
  author={Rantzer, Anders},
  journal={European Journal of Control},
  volume={24},
  pages={72--80},
  year={2015},
  publisher={Elsevier}
}

@article{sui2020vulnerability,
  title={The vulnerability of cyber-physical system under stealthy attacks},
  author={Sui, Tianju and Mo, Yilin and Marelli, Dami{\'a}n and Sun, Ximing and Fu, Minyue},
  journal={IEEE Transactions on Automatic Control},
  volume={66},
  number={2},
  pages={637--650},
  year={2020},
  publisher={IEEE}
}

@ARTICLE{nguyen2025scalable,
  author={Nguyen, Anh Tung and Anand, Sribalaji C. and Teixeira, André M. H.},
  journal={IEEE Transactions on Automatic Control}, 
  title={Scalable and Optimal Security Allocation in Networks Against Stealthy Injection Attacks}, 
  year={2026},
  volume={71},
  number={5},
  pages={3455-3462}
}

@inproceedings{anand2024scalable,
  title={Scalable Metrics to Quantify Security of Large-scale Systems},
  author={Anand, Sribalaji C and Grussler, Christian and Teixeira, Andr{\'e} MH},
  booktitle={2024 IEEE 63rd Conference on Decision and Control (CDC)},
  pages={7624--7630},
  year={2024},
  organization={IEEE}
}

@article{haddad2005stability,
  title={Stability and dissipativity theory for nonnegative dynamical systems: a unified analysis framework for biological and physiological systems},
  author={Haddad, Wassim M and Chellaboina, VijaySekhar},
  journal={Nonlinear Analysis: Real World Applications},
  volume={6},
  number={1},
  pages={35--65},
  year={2005},
  publisher={Elsevier}
}

@article{rantzer2015kalman,
  title={On the Kalman-Yakubovich-Popov lemma for positive systems},
  author={Rantzer, Anders},
  journal={IEEE Transactions on Automatic Control},
  volume={61},
  number={5},
  pages={1346--1349},
  year={2015},
  publisher={IEEE}
}

@book{farina2011positive,
  title={Positive linear systems: theory and applications},
  author={Farina, Lorenzo and Rinaldi, Sergio},
  year={2011},
  publisher={John Wiley \& Sons}
}

@inproceedings{milovsevic2018quantifying,
  title={Quantifying the impact of cyber-attack strategies for control systems equipped with an anomaly detector},
  author={Milo{\v{s}}evi{\'c}, Jezdimir and Umsonst, David and Sandberg, Henrik and Johansson, Karl Henrik},
  booktitle={2018 European Control Conference (ECC)},
  pages={331--337},
  year={2018},
  organization={IEEE}
}

@article{wang2007worst,
  title={Worst-case fault detection observer design: Optimization approach},
  author={Wang, HB and Wang, JL and Lam, J},
  journal={Journal of Opt. Theory and Applications},
  volume={132},
  number={3},
  pages={475--491},
  year={2007},
  publisher={Springer}
}

@article{elia2002minimization,
  title={Minimization of the worst case peak-to-peak gain via dynamic programming: state feedback case},
  author={Elia, Nicola and Dahleh, Munther A},
  journal={IEEE Transactions on Automatic Control},
  volume={45},
  number={4},
  pages={687--701},
  year={2002},
  publisher={IEEE}
}

@book{boyd2004convex,
  title={Convex optimization},
  author={Boyd, Stephen and Vandenberghe, Lieven},
  year={2004},
  publisher={Cambridge university press}
}

@article{yoon2005worst,
  title={On the worst-case disturbance of minimax optimal control},
  author={Yoon, Myung-Gon and Ugrinovskii, Valery A and Petersen, Ian R},
  journal={Automatica},
  volume={41},
  number={5},
  pages={847--855},
  year={2005},
  publisher={Elsevier}
}

@book{luenberger1997optimization,
  title={Optimization by vector space methods},
  author={Luenberger, David G},
  year={1997},
  publisher={John Wiley \& Sons}
}

@article{johansson2002quadruple,
  title={The quadruple-tank process: A multivariable laboratory process with an adjustable zero},
  author={Johansson, Karl Henrik},
  journal={IEEE Trans. on Control Systems Tech.},
  volume={8},
  number={3},
  pages={456--465},
  year={2002},
  publisher={IEEE}
}

@article{anand2020joint,
  title={Joint controller and detector design against data injection attacks on actuators},
  author={Anand, Sribalaji C and Teixeira, Andr{\'e} MH},
  journal={IFAC-PapersOnLine},
  volume={53},
  number={2},
  pages={7439--7445},
  year={2020},
  publisher={Elsevier}
}

@incollection{teixeira2021security,
  title={Security metrics for control systems},
  author={Teixeira, Andr{\'e} MH},
  booktitle={Safety, Security and Privacy for Cyber-Physical Systems},
  pages={99--121},
  year={2021},
  publisher={Springer}
}

@article{nguyen2022single,
  title={A single-adversary-single-detector zero-sum game in networked control systems},
  author={Nguyen, Anh Tung and Teixeira, Andr{\'e} MH and Medvedev, Alexander},
  journal={IFAC-PapersOnLine},
  volume={55},
  number={13},
  pages={49--54},
  year={2022},
  publisher={Elsevier}
}

@article{meng2017dynamic,
  title={Dynamic distributed control for networks with cooperative--antagonistic interactions},
  author={Meng, Deyuan},
  journal={IEEE Transactions on Automatic Control},
  volume={63},
  number={8},
  pages={2311--2326},
  year={2017},
  publisher={IEEE}
}

@article{ssun2017controllability,
  title={Controllability of multiagent networks with antagonistic interactions},
  author={Sun, Chao and Hu, Guoqiang and Xie, Lihua},
  journal={IEEE transactions on automatic control},
  volume={62},
  number={10},
  pages={5457--5462},
  year={2017},
  publisher={IEEE}
}

@article{meng2019extended,
  title={Extended structural balance theory and method for cooperative--antagonistic networks},
  author={Meng, Deyuan and Du, Mingjun and Wu, Yuxin},
  journal={IEEE Transactions on Automatic Control},
  volume={65},
  number={5},
  pages={2147--2154},
  year={2019},
  publisher={IEEE}
}

@article{willems1974existence,
  title={On the existence of a nonpositive solution to the Riccati equation},
  author={Willems, Jan},
  journal={IEEE Transactions on Automatic Control},
  volume={19},
  number={5},
  pages={592--593},
  year={1974},
  publisher={IEEE}
}

@incollection{trentelman1991dissipation,
  title={The dissipation inequality and the algebraic Riccati equation},
  author={Trentelman, Harry L and Willems, Jan C},
  booktitle={The Riccati Equation},
  pages={197--242},
  year={1991},
  publisher={Springer}
}

@book{petersen2012robust,
  title={Robust Control Design Using $H_{\infty}$ Methods},
  author={Petersen, Ian R and Ugrinovskii, Valery A and Savkin, Andrey V},
  year={2012},
  publisher={Springer Science \& Business Media}
}

@article{lipp2016antagonistic,
  title={Antagonistic control},
  author={Lipp, Thomas and Boyd, Stephen},
  journal={Systems \& Control Letters},
  volume={98},
  pages={44--48},
  year={2016},
  publisher={Elsevier}
}

@inproceedings{teixeira2015strategic,
  title={Strategic stealthy attacks: the output-to-output $\ell_2$-gain},
  author={Teixeira, Andr{\'e} and Sandberg, Henrik and Johansson, Karl H},
  booktitle={2015 54th IEEE Conference on Decision and Control (CDC)},
  pages={2582--2587},
  year={2015},
  organization={IEEE}
}

@article{mengis2017data,
  title={Data injection attacks on electricity markets by limited adversaries: Worst-case robustness},
  author={Mengis, Mateo R and Tajer, Ali},
  journal={IEEE Transactions on Smart Grid},
  volume={9},
  number={6},
  pages={5710--5720},
  year={2017},
  publisher={IEEE}
}

@inproceedings{lofberg2004yalmip,
  title={{YALMIP}: A toolbox for modeling and optimization in {MATLAB}},
  author={Lofberg, Johan},
  booktitle={2004 IEEE international conference on robotics and automation (IEEE Cat. No. 04CH37508)},
  pages={284--289},
  year={2004},
  organization={IEEE}
}

@article{aps2019mosek,
  title={Mosek optimization toolbox for matlab},
  author={ApS, Mosek},
  journal={User’s Guide and Reference Manual, Version},
  volume={4},
  number={1},
  pages={116},
  year={2019},
  publisher={MOSEK}
}

@article{ohlin2024optimal,
  title={Optimal control of linear cost networks},
  author={Ohlin, David and Tegling, Emma and Rantzer, Anders},
  journal={European Journal of Control},
  volume={80},
  pages={101068},
  year={2024},
  publisher={Elsevier}
}

@article{chen2013,
  title={$\ell_1$-induced norm and controller synthesis of positive systems},
  author={Chen, Xiaoming and Lam, James and Li, Ping and Shu, Zhan},
  journal={Automatica},
  volume={49},
  number={5},
  pages={1377--1385},
  year={2013},
  publisher={Elsevier}
}

@inproceedings{sargolzaei2021secure,
  title={A secure control design for networked control system with nonlinear dynamics under false-data-injection attacks},
  author={Sargolzaei, Arman},
  booktitle={2021 American Control Conference (ACC)},
  pages={2693--2699},
  year={2021},
  organization={IEEE}
}

@inproceedings{khazraei2022resiliency,
  title={Resiliency of nonlinear control systems to stealthy sensor attacks},
  author={Khazraei, Amir and Pajic, Miroslav},
  booktitle={2022 IEEE 61st Conference on Decision and Control (CDC)},
  pages={7109--7114},
  year={2022},
  organization={IEEE}
}

@article{shang2024nonlinear,
  title={Nonlinear stealthy attacks on remote state estimation},
  author={Shang, Jun and Zhou, Jing and Chen, Tongwen},
  journal={Automatica},
  volume={167},
  pages={111747},
  year={2024},
  publisher={Elsevier}
}

@article{kato2021security,
  title={Security analysis of linearization for nonlinear networked control systems under DoS},
  author={Kato, Rui and Cetinkaya, Ahmet and Ishii, Hideaki},
  journal={IEEE Transactions on Control of Network Systems},
  volume={8},
  number={4},
  pages={1692--1704},
  year={2021},
  publisher={IEEE}
}
\end{document}